\newtheorem{theorem}{Theorem}[section]
\newtheorem{lemma}[theorem]{Lemma}
\newtheorem{cor}[theorem]{Corollary}
\newtheorem{prop}[theorem]{Proposition}
\theoremstyle{definition}
\newtheorem{definition}[theorem]{Definition}
\newtheorem{problem}[theorem]{Problem}
\newtheorem{algo}[theorem]{Algorithm}
\theoremstyle{remark}
\tikzstyle{vertex}=[circle,fill=black!100,text=white,inner sep=0.8mm]
\tikzstyle{point}=[circle,fill=black,inner sep=0.1mm]
\DeclareMathOperator{\Free}{Free}
\def\X{\mathcal{X}}
\def\P{\mathcal{P}}
\def\S{\mathcal{S}}
\def\F{\mathcal{F}}
\def\K{\mathcal{K}}
\long\def\probl#1#2{~\par
\begin{compactitem}
\item[\textsc{Input:}]
#1
\item[\textsc{Output:}]
#2
\end{compactitem}}
\title{Deciding the Bell number for~hereditary~graph~properties%
\thanks{The authors gratefully acknowledge support from DIMAP: the Centre 
for Discrete Mathematics and its Applications at the University of Warwick, 
and from EPSRC, grant EP/L020408/1.} 
\thanks{A conference version of this paper appeared in the
  Proceedings of the 40th International Workshop on Graph-Theoretic Concepts in Computer Science (WG 2014),
  Lecture Notes in Computer Science vol.~8747.}
} 
\author{
Aistis Atminas%
\thanks{Current address: School of Science and Technology,
  Nottingham Trent University, Clifton Campus, Nottingham NG11 8NS, UK.}
\and
Andrew Collins
\and
Jan Foniok%
\thanks{Current address: School of Computing, Mathematics and Digital Technology,
  Manchester Metropolitan University, Chester Street, Manchester M1 5GD, UK.}
\and
Vadim V.~Lozin 
}
\date{{\it\small DIMAP and Mathematics Institute, University of Warwick, Coventry CV4 7AL, UK.}\\~\\ 4th March 2016}
\begin{document}

\maketitle
\begin{abstract}
The paper [J. Balogh, B. Bollob\'{a}s, D. Weinreich, A jump to the Bell number for hereditary graph properties, 
{\it J.\ Combin.\ Theory Ser.\ B\/} 95 (2005) 29--48] identifies a jump in the speed of hereditary graph properties 
to the Bell number~$B_n$ and provides a partial characterisation of the family of minimal classes whose speed 
is at least~$B_n$. In the present paper, we give a complete characterisation of this family. 
Since this family is infinite, the decidability of the problem of determining if the speed of 
a hereditary property is above or below the Bell number is questionable. We answer this question positively 
by showing that there exists an algorithm which, given a finite set~$\F$ of graphs, decides whether 
the speed of the class of graphs containing no induced subgraphs from the set~$\F$ is above or below the Bell number.
For properties defined by infinitely many minimal forbidden induced subgraphs, the speed is known to be above the Bell number.
\end{abstract}

{\em Keywords:} Hereditary class of graphs; Speed of hereditary properties; Bell number; Decidability

\section{Introduction}
A \emph{graph property} (or a \emph{class of graphs}\footnote{Throughout the paper we use the two terms -- 
graph property and class of graphs -- interchangeably.}) 
is a set of graphs closed under isomorphism.  
Given a property~$\X$, we write $\X_n$ for the number of graphs in~$\X$ 
with vertex set $\{1,2,\ldots,n\}$ (that is, we are counting \emph{labelled} graphs).
Following~\cite{speed}, we call $\X_n$ the \emph{speed} of the property~$\X$. 

A property is \emph{hereditary} if it is closed under taking induced subgraphs.
It is well-known (and can be easily seen) that a graph property~$\X$ is hereditary if and only if 
$\X$~can be described in terms of forbidden induced subgraphs. More formally, for a set~$\F$ of graphs 
we write $\Free(\F)$ for the class of graphs containing no induced subgraph isomorphic to any graph in the set~$\F$. 
A property~$\X$ is hereditary if and only if $\X=\Free(\F)$ for some set~$\F$. We call~$\F$ a set of 
\emph{forbidden induced subgraphs} for~$\X$ and say that graphs in~$\X$ are \emph{$\F$-free}.

The speeds of hereditary properties and their asymptotic structure have been extensively studied, 
originally in the special case of a single forbidden subgraph \cite{EFR86,EKR76,KPR87,PS1,PS3,PS2}, and more recently in general
\cite{Ale92,AloBalBol:The-structure-of-almost,speed,penultimate,jump,SZ94}. These studies showed, in particular, that 
there is a certain relationship between the speed of a property~$\X$ and the structure of graphs in~$\X$, and 
that the rates of the speed growth constitute discrete layers. The first four lower layers have been distinguished
in~\cite{SZ94}: these are constant, polynomial, exponential, and factorial layers. In other words, the authors of~\cite{SZ94}
showed that some classes of functions do not appear as the speed of any hereditary property, and that there are discrete 
jumps, for example, from polynomial to exponential speeds.  

Independently, similar results were obtained 
by Alekseev in~\cite{Ale97}. Moreover, Alekseev provided the first four layers with the description of all minimal classes,
that is, he identified in each layer the family of all classes every proper hereditary subclass of which belongs to a lower layer
(see also~\cite{speed} for some more involved results). In each of the first four lower layers the set 
of minimal classes is finite and each of them is defined by finitely many forbidden induced subgraphs. 
This provides an efficient way of determining whether a property~$\X$ belongs to one of the first three layers.

One more jump in the speed of hereditary properties was identified in~\cite{jump} and it separates~-- within the factorial layer~--
the properties with speeds strictly below the Bell number~$B_n$ from those whose speed is at least~$B_n$. 
With a slight abuse of terminology we will refer to these two families of graph properties as properties 
below and above the Bell number, respectively. The importance
of this jump is due to the fact that all the properties below the Bell number are well-structured. 
In particular, all of them have bounded clique-width~\cite{ALR09} and all of them are well-quasi-ordered by the induced 
subgraph relation~\cite{KL11}. From the results in~\cite{speed,KL11} it follows that every hereditary property below the Bell number can be characterised 
by finitely many forbidden induced subgraphs and hence the membership problem for each of them can be decided in polynomial time. 

Even so, very little is known about the boundary separating the two families,
that is, very little is known about the \emph{minimal} classes above the Bell number. Paper~\cite{jump} distinguishes
two cases in the study of this question: the case where a certain parameter associated with each class of graphs 
is finite and the case where this parameter is infinite. In the present paper, we call this parameter \emph{distinguishing number}.
For the case where the distinguishing number is infinite, \cite{jump}~provides a complete description
of minimal classes, of which there are precisely~13. For the case where the distinguishing number is finite, 
\cite{jump}~mentions only one minimal class above the Bell number (linear forests) and leaves 
the question of characterising other minimal classes open.

In the present paper, we give a complete answer to the above open question: we provide a structural 
characterisation of all minimal classes above the Bell number with a finite distinguishing number.
This family of minimal classes is infinite, which makes the problem of deciding whether 
a hereditary class is above or below the Bell number questionable. Nevertheless, for properties 
defined by \emph{finitely many} forbidden induced subgraphs, our characterisation allows us to 
prove decidability of this problem:  we show that there exists an algorithm which, 
given a finite set~$\F$ of graphs, decides whether the class $\Free(\F)$ is above or below the Bell number.  
 
All preliminary information related to the topic of the paper can be found in Section~\ref{sec:prelim}.
In Section~\ref{sec:structure}, we describe the minimal classes above the Bell number.
Finally, in Section~\ref{sec:decid} we present our decidability result.
Section~\ref{sec:con} concludes the paper with an open problem.

\section{Preliminaries and preparatory results}
\label{sec:prelim}

\subsection{Basic notation and terminology}

All graphs we consider are undirected without multiple edges.
The graphs in our hereditary classes have no loops; however, we allow loops in
some auxiliary graphs, called ``density graphs'' and denoted usually by~$H$, that are used to 
represent the global structure of our hereditary classes.

If $G$ is a graph, $V(G)$~stands for its vertex set, $E(G)$~for its edge set and $|G|$ for the number of vertices (the \emph{order}) of~$G$.
The edge joining two vertices $u$ and $v$ is~$uv$ (we do not use any brackets);
$uv$~is the same edge as~$vu$.

If $W\subseteq V(G)$, then $G[W]$~is the subgraph of~$G$ induced by~$W$.
For $W_1,W_2$ disjoint subsets of~$V(G)$ we define $G[W_1,W_2]$ to be the bipartite subgraph of~$G$
with vertex set~$W_1\cup W_2$ and edge set $\{uv: u\in W_1,\ v\in W_2,\ uv\in E(G)\}$.
The \emph{bipartite complement} of $G[W_1,W_2]$ is the bipartite graph in which two vertices 
$u\in W_1$, $v\in W_2$ are adjacent if and only if they are not adjacent in $G[W_1,W_2]$.

The \emph{neighbourhood} $N(u)$ of a vertex~$u$ in~$G$ is the set of all
vertices adjacent to~$u$, and the \emph{degree} of~$u$ is the number of its neighbours. 
Note that if (and only if) there is a loop at~$u$ then $u\in N(u)$.

As usual, $P_n$, $C_n$ and $K_n$ denote
the path, the cycle and the complete graph with $n$ vertices, respectively. 
Furthermore, $K_{1,n}$ is a star (i.e., a tree with $n+1$ vertices one of which has degree~$n$),
and $G_1+G_2$ is the disjoint union of two graphs. In particular, $mK_n$~is the disjoint union of $m$ copies of~$K_n$.

A \emph{forest} is a graph without cycles, i.e., a graph every connected component of which is a tree. 
A \emph{star forest} is a forest every connected component of which is a star, 
and a \emph{linear forest} is a forest every connected component of which is a path.  

A \emph{quasi-order} is a binary relation which is reflexive and transitive.
A \emph{well-quasi-order} is a quasi-order which contains neither
infinite strictly decreasing sequences nor infinite antichains (sets of pairwise incomparable elements).
That is, in a well-quasi-order any infinite sequence of elements contains an infinite increasing subsequence.

\bigskip

Recall that the Bell number~$B_n$, defined as the number of ways
to partition a set of $n$ labelled elements, satisfies the asymptotic
formula $\ln B_n / n = \ln n-\ln\ln n +\Theta(1)$.

Balogh, Bollob\'{a}s and Weinreich~\cite{jump} showed that if the speed
of a hereditary graph property is at least $n^{(1-o(1))n}$, then it
is actually at least~$B_n$; hence we call any such property a
\emph{property above the Bell number}. Note that this includes
hereditary properties whose speed is exactly equal to the Bell numbers
(such as the class of disjoint unions of cliques).

\subsection{$(\ell,d)$-graphs and sparsification}

Given a graph $G$ and two vertex subsets $U, W \subset V(G)$,  define
$\Delta(U,W)=\max \{ |{N(u) \cap W|},\allowbreak |{N(w) \cap U}| : u \in U, w \in W\}$.
With $\overline{N}(u)=V(G) \backslash (N(u) \cup \{u\})$,
let $\overline{\Delta}(U,W)=\max\{|\overline{N}(u) \cap W|,\allowbreak |\overline{N}(w) \cap U|: w \in W, u \in U \}$.
Note that $\Delta(U,U)$ is simply the maximum degree in~$G[U]$.

\begin{definition}
Let $G$ be a graph. A partition $\pi=\{V_1,V_2,\dotsc,V_{\ell'}\}$ of~$V(G)$ is an \emph{$(\ell,d)$-partition}
if $\ell'\leq \ell$ and for each pair of not necessarily distinct integers $i,j \in \{1,2,\dotsc,\ell'\}$ either
$\Delta(V_i, V_j) \leq d$ or $\overline{\Delta}(V_i, V_j) \leq d$.
We call the sets $V_i$ \emph{bags}.
A graph $G$ is an \emph{$(\ell, d)$-graph} if it admits an $(\ell,d)$-partition.
\end{definition}

If $\Delta (V_i, V_j) \leq d$, we say $V_i$ is \emph{$d$-sparse} with respect to $V_j$,
and if $\overline{\Delta}(V_i, V_j) \leq d$, we say $V_i$ is \emph{$d$-dense} with respect to~$V_j$.
We will also say that the pair $(V_i, V_j)$ is \emph{$d$-sparse} or \emph{$d$-dense}, respectively. 
Note that if the bags are large enough (i.e., $\min\{|V_i|\} > 2d+1$), the terms $d$-dense and $d$-sparse are mutually exclusive. 

\begin{definition}
A \emph{strong $(\ell,d)$-partition} is an $(\ell,d)$-partition each bag of which contains at least $5 \times 2^\ell d$ vertices;
a \emph{strong $(\ell,d)$-graph} is a graph which admits a strong $(\ell,d)$-partition.
\end{definition}

Given any strong $(\ell,d)$-partition $\pi = \{V_1, V_2, \ldots, V_{\ell'}\}$ we define an equivalence relation~$\sim$ 
on the bags by putting $V_i \sim V_j$ if and only if for each~$k$, either $V_k$~is $d$-dense with respect to both $V_i$ and~$V_j$,
or $V_k$~is $d$-sparse with respect to both $V_i$ and~$V_j$.
Let us call a partition $\pi$ \emph{prime} if all its $\sim$-equivalence classes are of size~1.
If the partition $\pi$ is not prime, let $p(\pi)$ be the partition consisting of unions of bags in the $\sim$-equivalence classes
for~$\pi$.

We proceed to showing that the partition $p(\pi)$ of a strong
$(\ell,d)$-graph does not depend on the choice of a strong $(\ell,d)$-partition~$\pi$.
The following three lemmas are the ingredients for the proof of this result.

\begin{lemma} \label{symdif0}
Consider any strong $(\ell,d)$-graph $G$ with any strong $(\ell,d)$-partition $\pi$.
Then $p(\pi)$ is an $(\ell , \ell d)$-partition with at least $5 \times 2^\ell d$ vertices in each bag. 
\end{lemma}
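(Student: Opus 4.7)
The plan is to verify the three defining features of an $(\ell,\ell d)$-partition directly for $p(\pi)$. Write $p(\pi)=\{U_1,\dotsc,U_{\ell''}\}$, where each $U_r$ is the union of the bags of $\pi$ lying in one $\sim$-equivalence class. First, since $p(\pi)$ is obtained from $\pi$ by merging bags, $\ell''\le\ell'\le\ell$. Second, each $U_r$ is a union of bags of $\pi$, each of which already contains at least $5\times 2^{\ell}d$ vertices, so the size requirement holds trivially. The real content is the sparse/dense condition on pairs.

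For the pair condition, fix two (not necessarily distinct) bags $U_i, U_j$ of $p(\pi)$, and write $U_i=V_{i_1}\cup\dots\cup V_{i_a}$, $U_j=V_{j_1}\cup\dots\cup V_{j_b}$, with $a,b\le\ell'$. The key claim is that \emph{all} sub-pairs $(V_{i_k},V_{j_m})$ have the same type (all sparse, or all dense). To establish it I would fix the type of $(V_{i_1},V_{j_1})$, say sparse, and propagate in two steps. Since $V_{j_1}\sim V_{j_m}$, the definition of $\sim$ applied with the ``witness'' bag $V_{i_1}$ says that $V_{i_1}$ is $d$-sparse with respect to $V_{j_m}$ iff it is $d$-sparse with respect to $V_{j_1}$; hence $(V_{i_1},V_{j_m})$ is sparse for every~$m$. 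Next, since $V_{i_1}\sim V_{i_k}$, applying the definition with witness~$V_{j_m}$ gives that $V_{j_m}$ is $d$-sparse with respect to $V_{i_k}$ iff with respect to $V_{i_1}$, and we have just shown the latter. Therefore every pair $(V_{i_k},V_{j_m})$ is sparse, as claimed. (Here the assumption that bags of $\pi$ are large, $|V_{i_k}|>2d+1$, is implicit in treating ``sparse'' and ``dense'' as well-defined mutually exclusive labels, which is guaranteed by the strong partition hypothesis.)

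With the uniform-type claim in hand, the bounds on $\Delta(U_i,U_j)$ and $\overline\Delta(U_i,U_j)$ follow by summation. In the sparse case, any $u\in V_{i_k}\subseteq U_i$ satisfies
\[
|N(u)\cap U_j|=\sum_{m=1}^{b}|N(u)\cap V_{j_m}|\le b\,d\le\ell d,
\]
and symmetrically $|N(v)\cap U_i|\le a\,d\le\ell d$ for $v\in U_j$; thus $\Delta(U_i,U_j)\le\ell d$. The dense case is identical with $\overline N$ in place of $N$. The diagonal case $U_i=U_j$ is handled by the same argument with both indexings ranging over the same equivalence class, once one observes that $V_{i_1}\sim V_{i_k}$ together with a witness~$V_{i_1}$ forces $(V_{i_k},V_{i_1})$ to have the same type as $(V_{i_1},V_{i_1})$, and then a second use of $\sim$ with witness~$V_{i_k}$ propagates this to $(V_{i_k},V_{i_m})$.

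I do not anticipate any serious obstacle; the only delicate point is organising the double application of the relation~$\sim$ so that the uniform-type conclusion is extracted cleanly for all $a\cdot b$ sub-pairs (including the diagonal), after which the stated quantitative bound is a one-line sum.
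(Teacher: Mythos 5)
Your proof is correct and follows essentially the same route as the paper's: merge the bags of each $\sim$-class, argue that all sub-pairs $(V_{i_k},V_{j_m})$ between two merged bags have a common type, and then bound $\Delta$ (or $\overline\Delta$) by summing over at most $\ell$ bags to get $\ell d$, with the size bound being immediate. The only difference is that the paper takes the uniform-type fact as immediate from the definition of $\sim$, whereas you spell it out via the two-step propagation (including the diagonal case), which is a fair elaboration rather than a new idea.
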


\begin{proof}
Consider two bags $W_1, W_2 \in p(\pi)$. By definition
$W_i=\bigcup_{s \in S_i} V_s$ for some $S_i  \subset \{1,2,\dotsc,\ell'\}$, $i=1,2$.
Also, by the definition of the partition, for all $(s_1, s_2) \in S_1 \times S_2$
the pairs $(V_{s_1}, V_{s_2})$ are all either $d$-dense or $d$-sparse.
If they are $d$-sparse, then for any $s_1 \in S_1$ we have
$\Delta(V_{s_1}, W_2) \leq \sum_{s_2 \in S_2} \Delta(V_{s_1}, V_{s_2}) \leq |S_2|d$.
Since this holds for every $s_1 \in S_1$, for all $x \in W_1$ we have that $|N(x) \cap W_2| \leq |S_2| d$.
Similarly we conclude that for all $x \in W_2$ we have $|N(x) \cap W_1| \leq |S_1| d$.
Therefore, $\Delta(W_1,W_2)\leq \max(|S_1|, |S_2|) d \leq \ell d$.
If the pairs of bags are $d$-dense, a similar argument proves that $\overline{\Delta}(W_1,W_2) \leq \ell d$.
Hence the partition $p(\pi)$ is an $(\ell,\ell d)$-partition.
As it is obtained by unifying some bags from a strong $(\ell,d)$-partition,
we conclude that each bag is of size at least $5 \times 2^\ell d$.
\end{proof}

\begin{lemma}[{\cite[Lemma 10]{jump}}]
\label{symdif1}
Let $G$ be a graph with an $(\ell,d)$-partition $\pi$. If two vertices $x,y \in G$ are in the same bag $V_k$,
then the symmetric difference of their neighbourhoods $N(x) \ominus N(y)$ is of size at most $2\ell d$.
\qed
\end{lemma}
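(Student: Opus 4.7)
The plan is to decompose $N(x) \ominus N(y)$ along the bags of $\pi = \{V_1, \ldots, V_{\ell'}\}$, writing
\[
|N(x) \ominus N(y)| = \sum_{j=1}^{\ell'} \bigl|(N(x) \ominus N(y)) \cap V_j\bigr|,
\]
and then show that each summand is at most $2d$; since $\ell' \leq \ell$, this immediately gives the claimed total of at most $2\ell d$.

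To bound a single summand, I would fix a bag $V_j$ and use the fact that, because $x, y \in V_k$ and $\pi$ is an $(\ell, d)$-partition, the pair $(V_k, V_j)$ is either $d$-sparse or $d$-dense. In the \emph{sparse} case, each of $N(x) \cap V_j$ and $N(y) \cap V_j$ has at most $d$ elements, so their symmetric difference has at most $2d$ by the triangle inequality. In the \emph{dense} case, I would invoke the elementary identity $(A \cap V_j) \ominus (B \cap V_j) = (V_j \setminus A) \ominus (V_j \setminus B)$ with $A = N(x)$ and $B = N(y)$; each complement on the right is essentially $\overline{N}(\cdot) \cap V_j$, which has at most $d$ elements by the $d$-dense hypothesis, so the triangle inequality again produces the $2d$ bound.

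The one delicate point---and the main obstacle---is the diagonal bag $V_j = V_k$ itself. Here both $x$ and $y$ lie in $V_j$, so ``$V_j \setminus N(x)$'' contains the extra vertex $x$ beyond $\overline{N}(x) \cap V_j$ (our graphs are loopless, so $x \notin N(x)$), and similarly for $y$; this means the per-bag bound in the dense case threatens to degrade by a small additive constant. A little care resolves it: one can observe that $x$ contributes to $(N(x) \ominus N(y)) \cap V_k$ exactly when $xy \in E$, and in that case so does $y$, so the two potential extra elements appear jointly and can be absorbed into the per-bag budget (or, more crudely, into the slack from $\ell' \leq \ell$). Summing the resulting per-bag bounds over the at most $\ell$ bags of $\pi$ then delivers $|N(x) \ominus N(y)| \leq 2\ell d$.
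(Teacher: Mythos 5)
First, a contextual note: the paper does not prove this lemma at all --- it is imported verbatim from \cite{jump} (their Lemma~10) and stated with a \qed --- so your argument has to stand on its own merits rather than be compared with a proof in the text. Your overall route (decompose $N(x)\ominus N(y)$ over the bags, use the sparse/dense dichotomy for the pair $(V_k,V_j)$, bound each bag's contribution by $2d$) is the natural one, and it works without incident for every off-diagonal bag and for the diagonal bag in the sparse case.

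The genuine gap is exactly the point you flag and then wave away: the bag $V_k$ itself when $(V_k,V_k)$ is $d$-dense and $xy\in E(G)$. In that case $x$ and $y$ do both lie in $N(x)\ominus N(y)$, but neither belongs to $\overline{N}(x)\cap V_k$ nor to $\overline{N}(y)\cap V_k$ (each of which may have full size~$d$), so there is nothing in the per-bag budget for them to be ``absorbed'' into: the contribution of $V_k$ can genuinely reach $2d+2$. The slack from $\ell'\le\ell$ is not available either, since $\ell'$ may equal $\ell$. Indeed, the bound $2\ell d$ taken literally is false: let $\ell=d=1$ and let $G$ be $K_6$ minus a perfect matching with the single bag $V(G)$, which is $1$-dense to itself; for adjacent $x,y$ with matched partners $x',y'$ one gets $N(x)\ominus N(y)=\{x,x',y,y'\}$, of size $4>2\ell d$, and a variant with all $\ell$ bags dense to $V_k$ shows the extra $+2$ cannot be hidden for any $\ell$. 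What your argument actually establishes is $|(N(x)\ominus N(y))\setminus\{x,y\}|\le 2\ell d$, equivalently $|N(x)\ominus N(y)|\le 2\ell d+2$, with the excess occurring only when $V_k$ is dense to itself and $xy\in E(G)$. That weaker form is all that is needed where the lemma is used in the proof of Theorem~\ref{partitions}, since $2\ell^2 d+2<5\times 2^\ell d-2d$ still holds; but a correct write-up must either state the conclusion for $(N(x)\ominus N(y))\setminus\{x,y\}$ or carry the additive $+2$, rather than claim the two extra vertices can be absorbed.
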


\begin{lemma} \label{symdif2}
Let $G$ be a graph with a strong $(\ell,d)$-partition $\pi$.
If two vertices $x, y \in V(G)$ belong to different bags of the partition $p(\pi)$,
then the symmetric difference of their neighbourhoods $N(x) \ominus N(y)$ is of size at least $5 \times 2^{\ell}d-2d$.  
\end{lemma}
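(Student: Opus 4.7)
The plan is to exhibit a single bag $V_k$ of $\pi$ on which the neighbourhoods of $x$ and $y$ behave very differently, and then read off the lower bound on $|N(x)\ominus N(y)|$ from the contribution of $V_k$ alone.

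First I would unpack what it means for $x,y$ to lie in different bags of $p(\pi)$. Since the bags of $p(\pi)$ are unions of $\sim$-classes of bags of $\pi$, there exist $V_i,V_j\in\pi$ (necessarily distinct) with $x\in V_i$, $y\in V_j$, and $V_i\not\sim V_j$. By the definition of $\sim$, this produces a bag $V_k\in\pi$ which is $d$-sparse with respect to one of $V_i,V_j$ and $d$-dense with respect to the other; without loss of generality $(V_k,V_i)$ is $d$-sparse and $(V_k,V_j)$ is $d$-dense. Because $\pi$ is a \emph{strong} $(\ell,d)$-partition, every bag has size at least $5\times 2^{\ell}d>2d+1$, so these two possibilities are genuinely mutually exclusive, and the choice of ``sparse'' vs.\ ``dense'' for $V_k$ against each of $V_i,V_j$ is unambiguous.

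Next I would use these two conditions to estimate $|N(x)\cap V_k|$ and $|N(y)\cap V_k|$. Sparseness of $(V_k,V_i)$ gives $|N(x)\cap V_k|\le d$, while denseness of $(V_k,V_j)$ gives $|\overline{N}(y)\cap V_k|\le d$, hence $|N(y)\cap V_k|\ge |V_k|-d$. Combining these via the general inequality
\[
\bigl|(N(x)\ominus N(y))\cap V_k\bigr|=\bigl|(N(x)\cap V_k)\ominus(N(y)\cap V_k)\bigr|\ \ge\ \bigl||N(y)\cap V_k|-|N(x)\cap V_k|\bigr|,
\]
I get
\[
\bigl|(N(x)\ominus N(y))\cap V_k\bigr|\ \ge\ (|V_k|-d)-d\ =\ |V_k|-2d\ \ge\ 5\times 2^{\ell}d-2d,
\]
where the last step uses that $\pi$ is strong. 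Since $(N(x)\ominus N(y))\cap V_k\subseteq N(x)\ominus N(y)$, the claimed bound $|N(x)\ominus N(y)|\ge 5\times 2^{\ell}d-2d$ follows immediately. I should briefly remark that this argument is insensitive to whether $x$ or $y$ happens to belong to $V_k$ itself: the inequality $|A\ominus B|\ge\bigl||A|-|B|\bigr|$ applied to $A=N(x)\cap V_k$ and $B=N(y)\cap V_k$ only uses the cardinalities, so possible membership of $x$ or $y$ in $V_k$ shifts each cardinality by at most $1$ but contributes nothing essential.

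I do not expect any real obstacle here; the content of the lemma is essentially the contrapositive of the definition of $\sim$, and the strong-partition hypothesis is tailored precisely so that the size $|V_k|$ dominates the $2d$ slack coming from the sparse/dense approximations. The only point worth double-checking is the unambiguity of ``sparse'' versus ``dense'' for a single pair, which is exactly why the factor $5\times 2^{\ell}d$ (much larger than $2d+1$) is built into the definition of a strong partition.
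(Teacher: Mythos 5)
Your proposal is correct and follows essentially the same route as the paper: extract a witness bag $V_k$ that is $d$-sparse with respect to one of $V_i,V_j$ and $d$-dense with respect to the other, bound $|N(x)\cap V_k|\le d$ and $|N(y)\cap V_k|\ge |V_k|-d$, and read the lower bound off the contribution of $V_k$ alone using $|V_k|\ge 5\times 2^{\ell}d$. Your extra remark about $x$ or $y$ possibly lying in $V_k$ is a detail the paper silently glosses over (it costs at most $1$ via the exclusion of the vertex itself from $\overline{N}$, harmless for the later application), so there is no substantive difference between the two arguments.
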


\begin{proof}
Take any two vertices $x \in V_i$ and $y \in V_j$ with bags $V_i$
and $V_j$ belonging to different $\sim$-equivalence classes.
Then there is a bag $V_k$ such that one of the pairs $(V_i,V_k)$ and
$(V_j,V_k)$ is $d$-dense and the other one is $d$-sparse;
without loss of generality, suppose that $(V_i,V_k)$ is $d$-sparse and $(V_j,V_k)$ is $d$-dense.
Then, in particular, $|N(x) \cap V_k| \leq d$ and $|N(y) \cap V_k| \geq |V_k|-d$.
Hence $|N(x) \ominus N(y)| \geq |N(y) \setminus N(x)| \geq |V_k|-2d \geq 5 \times 2^\ell d-2d$.
\end{proof}

We are now ready to prove the uniqueness of $p(\pi)$.  

\begin{theorem}\label{partitions}
Let $G$ be a strong $(\ell,d)$-graph with strong $(\ell,d)$-partitions $\pi$ and $\pi'$. Then $p(\pi)=p(\pi')$.   
\end{theorem}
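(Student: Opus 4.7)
The plan is to characterise the partition $p(\pi)$ intrinsically, using only the graph~$G$ and the parameters $\ell,d$, so that it cannot depend on the choice of strong $(\ell,d)$-partition~$\pi$. The characterisation will be via the symmetric difference of neighbourhoods: two vertices lie in the same bag of $p(\pi)$ if and only if $|N(x)\ominus N(y)|$ is ``small'', for a threshold depending only on $\ell$ and $d$.

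First I would establish the upper bound. Given any strong $(\ell,d)$-partition~$\pi$, Lemma~\ref{symdif0} tells us that $p(\pi)$ is an $(\ell,\ell d)$-partition. Applying Lemma~\ref{symdif1} to this $(\ell,\ell d)$-partition, I get that whenever $x$ and~$y$ lie in the same bag of~$p(\pi)$,
\[
|N(x)\ominus N(y)| \leq 2\ell\cdot(\ell d) = 2\ell^{2}d.
\]
Next, Lemma~\ref{symdif2} applied directly to~$\pi$ gives that whenever $x$ and~$y$ lie in different bags of~$p(\pi)$,
\[
|N(x)\ominus N(y)| \geq 5\cdot 2^{\ell}d - 2d.
\]

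The key separation to check is the elementary inequality $2\ell^{2}d < 5\cdot 2^{\ell}d - 2d$, equivalently $2\ell^{2}+2<5\cdot 2^{\ell}$; this holds for every $\ell\geq 1$ (by induction, or by direct verification for small~$\ell$ combined with the fact that $2^{\ell}$ eventually dominates any polynomial). Consequently, for every strong $(\ell,d)$-partition~$\pi$, the bag of $p(\pi)$ containing a vertex $x$ is exactly
\[
B(x) = \{\,y\in V(G) : |N(x)\ominus N(y)| \leq 2\ell^{2}d\,\},
\]
a set that depends only on~$G$, $\ell$ and~$d$. Since this same characterisation applies to any other strong $(\ell,d)$-partition~$\pi'$, we conclude $p(\pi)=p(\pi')$.

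There is essentially no obstacle in the argument: all three lemmas preceding the theorem have been set up precisely to make this go through, and the only nontrivial point is checking that the upper and lower bounds for $|N(x)\ominus N(y)|$ do not overlap. This is exactly where the strength requirement ``each bag has size at least $5\times 2^{\ell}d$'' in the definition of a strong partition is used: the exponential lower bound $5\cdot 2^{\ell}d$ beats the polynomial upper bound $2\ell^{2}d$ inherited from Lemma~\ref{symdif1}, which is what makes the characterisation unambiguous.
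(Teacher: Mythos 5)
Your proof is correct and follows essentially the same route as the paper: both rest on Lemma~\ref{symdif0} plus Lemma~\ref{symdif1} for the upper bound $2\ell^2 d$, Lemma~\ref{symdif2} for the lower bound $5\cdot 2^{\ell}d-2d$, and the strict inequality between them; your intrinsic description of the bags as $B(x)=\{y:|N(x)\ominus N(y)|\le 2\ell^2 d\}$ is just a repackaging of the paper's ``same bag of $p(\pi)$ implies same bag of $p(\pi')$, then use symmetry'' argument. (The strict inequality tacitly requires $d\ge 1$, but the paper's own proof makes the same implicit assumption.)
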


\begin{proof}
Assume two vertices $x, y \in V(G)$ are in the same bag of the partition $p(\pi)$.
By Lemma~\ref{symdif0}, $p(\pi)$~is an $(\ell, \ell d)$-partition, so
applying Lemma~\ref{symdif1} to~$p(\pi)$ we obtain
$|N(x) \ominus N(y)| \leq 2\ell(\ell d)=2\ell^2d < 5 \times 2^\ell d-2d$.
Thus by Lemma~\ref{symdif2}, $x$ and~$y$ are in the same bag of~$p(\pi')$.
Hence, using symmetry, $x$~and $y$ are in the same bag of~$p(\pi)$
if and only if they are in the same bag of~$p(\pi')$.
We deduce that the partitions are the same, i.e., $p(\pi)=p(\pi')$.
\end{proof}

With any strong $(\ell,d)$-partition $\pi=\{V_1,V_2,\dotsc,V_{\ell'}\}$ of a graph~$G$
we can associate a \emph{density graph} (with loops allowed) $H=H(G,\pi)$:
the vertex set of~$H$ is $\{1,2,\dotsc,\ell'\}$ and there is an edge
joining $i$ and $j$ if and only if $(V_i,V_j)$ is a $d$-dense pair
(so there is a loop at~$i$ if and only if $V_i$~is $d$-dense).

For a graph~$G$, a vertex partition $\pi=\{V_1,V_2,\dotsc,V_{\ell'}\}$ of~$G$
and a graph~$H$ (with loops allowed) with vertex set $\{1,2,\dotsc,\ell'\}$, 
we define (as in~\cite{speed}) the \emph{$H,\pi$-transform} $\psi(G,\pi,H)$ 
to be the graph obtained from~$G$ by replacing $G[V_i, V_j]$ with its 
bipartite complement for every pair $(V_i, V_j)$ for which $ij$~is 
an edge of~$H$, and replacing $G[V_i]$ with its complement for every~$V_i$ 
for which there is a loop at the vertex~$i$ in~$H$.

Moreover, if $\pi$ is a strong $(\ell,d)$-partition we define
$\phi(G,\pi)=\psi(G,\pi,H(G,\pi))$.
Note that $\pi$ is a strong $(\ell,d)$-partition for $\phi(G,\pi)$ and
each pair $(V_i,V_j)$ is $d$-sparse in~$\phi(G,\pi)$. We now show that
the result of this ``sparsification'' does not depend on the initial
strong $(\ell,d)$-partition.

\begin{prop}
\label{prop:phi}
Let $G$ be a strong $(\ell,d)$-graph. Then for any two strong
$(\ell,d)$-partitions $\pi$ and $\pi'$, the graph $\phi(G, \pi)$
is identical to $\phi(G, \pi')$.
\end{prop}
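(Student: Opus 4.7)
The plan is to show, for each pair of vertices $u,v\in V(G)$, that $uv$ is an edge of $\phi(G,\pi)$ if and only if it is an edge of $\phi(G,\pi')$, by extracting the flipping pattern of $\phi$ from data intrinsic to $G$. By Theorem~\ref{partitions}, the coarsened partitions coincide: $p(\pi)=p(\pi')=:\{W_1,\dotsc,W_m\}$, so it suffices to argue at the granularity of these common $W$-bags.

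First I would unpack the definition of $\sim$ by setting $V_k=V_i$ in the quantifier: if two bags $V_i,V_j$ of $\pi$ lie in the same $W_k$, then $V_i$ carries a loop in $H(G,\pi)$ precisely when the pair $(V_i,V_j)$ is $d$-dense (and symmetrically for $V_j$). Hence within a single $W_k$ either \emph{every} bag of $\pi$ contained in $W_k$ carries a loop and every pair of such bags is $d$-dense --- in which case $\phi$ complements every pair of vertices inside $W_k$ --- or \emph{no} such bag carries a loop and every pair is $d$-sparse, in which case $\phi$ leaves $G[W_k]$ unchanged. An analogous argument (fixing a reference bag inside $W_{k'}$) shows that for $W_k\neq W_{k'}$ the $d$-dense/$d$-sparse status of any pair $(V_i,V_j)$ with $V_i\subseteq W_k$ and $V_j\subseteq W_{k'}$ depends only on the two classes $W_k,W_{k'}$, so $\phi$ uniformly either flips or preserves the bipartite subgraph $G[W_k,W_{k'}]$.

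Next I would recover this coarse dichotomy purely from $G$ and the common refinement. Fix $u\in W_k$: if $W_k$ is internally sparse in $\pi$, summing the bounds $|N(u)\cap V_j|\leq d$ over the at most $\ell$ sub-bags $V_j\subseteq W_k$ yields $|N(u)\cap W_k|\leq \ell d$; if internally dense, the dual count on non-neighbours gives $|N(u)\cap W_k|\geq |W_k|-1-\ell d$. These two ranges are disjoint because Lemma~\ref{symdif0} provides $|W_k|\geq 5\cdot 2^\ell d>2\ell d+1$, so the status of $W_k$ is determined by the single quantity $|N(u)\cap W_k|$, with no reference to $\pi$. The identical estimate applied to $|N(u)\cap W_{k'}|$ for $u\in W_k$ determines whether $(W_k,W_{k'})$ is dense or sparse. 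Since all quantities involved are partition-free, $\pi$ and $\pi'$ return the same answers at the $W$-level.

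Combining the two observations, the set of vertex pairs that $\phi$ flips is described purely in terms of $G$ and $\{W_1,\dotsc,W_m\}$, so $\phi(G,\pi)=\phi(G,\pi')$. The main subtlety to set up carefully is the coupling between the loop status of $V_i$ and the $d$-density of the pairs $(V_i,V_j)$ inside one $\sim$-class; once that is in hand, the rest is a routine degree computation resting on the lower bound on bag sizes built into the definition of a \emph{strong} $(\ell,d)$-partition.
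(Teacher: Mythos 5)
Your proposal is correct and follows essentially the same route as the paper: reduce to the common coarsening $p(\pi)=p(\pi')$ via Theorem~\ref{partitions}, then observe that because the $W$-bags have at least $5\times 2^{\ell}d>2\ell d+1$ vertices, the dense/sparse status (and hence the flip pattern of $\phi$) is intrinsic at the $W$-level, so both partitions produce the same sparsification. You merely spell out two steps the paper leaves implicit, namely the loop/pair coupling inside a $\sim$-class and the degree-count separation certifying mutual exclusivity of $\ell d$-dense and $\ell d$-sparse.
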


\begin{proof}
Suppose that $\pi=\{U_1,U_2,\dotsc,U_{\hat \ell}\}$ and $\pi'=\{V_1,V_2,\dotsc,V_{\hat \ell'}\}$.
By Theorem~\ref{partitions},
$p(\pi)=p(\pi')=\{W_1,W_2,\dotsc,W_{\hat \ell''}\}$.
Consider two vertices $x$,~$y$ of~$G$.
Let $i,j,i',j',i'',j''$ be the indices such that
$x\in U_i$, $x\in V_{i'}$, $x\in W_{i''}$, $y\in U_j$, $y\in V_{j'}$, $y\in W_{j''}$.
As the partitions have at least $5 \times 2^\ell d$ vertices in each
bag, $\ell d$-dense and $\ell d$-sparse are mutually exclusive properties.
Hence the pair $(U_i,U_j)$ is $d$-sparse if and only if
$(W_{i''},W_{j''})$ is $\ell d$-sparse if and only if
$(V_{i'},V_{j'})$ is $d$-sparse; and analogously for dense pairs.
Therefore $ij\in E(H(G,\pi))$ if and only if $i''j''\in E(H(G,p(\pi)))$ if and only if
$i'j'\in E(H(G,\pi'))$.
We conclude that $xy$~is an edge of $\phi(G,\pi)$ if and only if
it is an edge of~$\phi(G,\pi')$.
\end{proof}

Proposition~\ref{prop:phi} motivates the following definition, originating from~\cite{speed}.

\begin{definition}
\label{def:phi}
For a strong $(\ell,d)$-graph~$G$, its \emph{sparsification} is
$\phi(G)=\phi(G, \pi)$ for any strong $(\ell,d)$-partition~$\pi$ of~$G$.
\end{definition}

\subsection{Distinguishing number $k_{\X}$}

In this section, we discuss the distinguishing number of a hereditary
graph property, which is an important parameter introduced by Balogh,
Bollob\'{a}s and Weinreich in~\cite{speed}.

Given a graph $G$ and a set $X=\{v_1, \dotsc, v_t\} \subseteq V(G)$,
we say that the disjoint subsets $U_1$,~\dots,~$U_m$ of~$V(G)$ are
\emph{distinguished} by~$X$ if for each~$i$, all vertices of~$U_i$
have the same neighbourhood in $X$, and for each $i \neq j$,
vertices $x \in U_i$ and $y \in U_j$ have different neighbourhoods in~$X$.
We also say that $X$~\emph{distinguishes} the sets $U_1$, $U_2$,~\dots,~$U_m$.

\begin{definition}
\label{def:dist}
Given a hereditary property $\X$, we define the \emph{distinguishing number}~$k_{\X}$ as follows:
\begin{compactenum}[(a)]
\item
If for all $k, m \in \mathbb{N}$ we can find a graph $G \in\X$ that admits
some $X \subset V(G)$ distinguishing at least $m$~sets, each  of size at least~$k$,
then put $k_{\X}=\infty$.
\item
Otherwise, there must exist a pair $(k, m)$ such that any vertex subset
of any graph $G \in \X$ distinguishes at most $m$~sets of size at least~$k$.
We define $k_{\X}$ to be the minimum value of~$k$ in all such pairs.
\end{compactenum}
\end{definition}

In \cite{speed}, Balogh, Bollob\'{a}s and Weinreich show that the speed
of any hereditary property~$\X$ with $k_\X=\infty$ is above the Bell number.
To study the classes with $k_\X<\infty$, in the next sections we will need two results from their paper.

\begin{lemma}[\cite{speed}, Lemma 27]\label{lem:2.10}
If $\X$ is a hereditary property with finite distinguishing number~$k_{\X}$,
then there exist absolute constants $\ell_{\X}$, $d_{\X}\le k_{\X}$ and $c_{\X}$
such that for all $G \in {\X}$, the graph~$G$ contains an induced subgraph~$G'$
such that $G'$~is an $(\ell_{\X}, d_{\X})$-graph and ${|V(G) \backslash V(G')| < c_{\X}}$.
\qed
\end{lemma}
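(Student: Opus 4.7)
The plan is to build, for each $G \in \X$, a bounded-size vertex set $X \subseteq V(G)$ whose neighbourhood classes on $V(G) \setminus X$ will serve (after a small amount of trimming) as the bags of the desired $(\ell_\X, d_\X)$-partition.

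Fix a pair $(k, m)$ witnessing the finiteness of $k_\X$, with $k = k_\X$. Starting from $X_0 = \emptyset$, I would greedily add a vertex $v \in V(G) \setminus X_t$ to $X_t$ whenever doing so strictly increases the number of neighbourhood classes of size at least~$k$ distinguished by the current set. Since this count is always at most~$m$, the process terminates with $|X| \le m$. Let $U_1, \dotsc, U_s$ (with $s \le m$) be the resulting classes of size at least~$k$, and let $R_0 = V(G) \setminus (X \cup U_1 \cup \dotsb \cup U_s)$; the latter is a union of at most $2^{|X|}$ classes each of size $< k$, giving $|R_0| \le 2^m(k-1)$.

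The termination criterion yields a key per-vertex condition: for every $v \in V(G) \setminus X$ and every~$j$, $\min\{|N(v) \cap U_j|, |U_j \setminus N(v)|\} < k$, for otherwise adding~$v$ to~$X$ would split~$U_j$ into two large parts and raise the distinguished-class count. To upgrade this to a uniform condition on each ordered pair $(i,j)$, I would partition $U_i$ into the sparse-to-$U_j$ part $U_i^s = \{v \in U_i : |N(v) \cap U_j| < k\}$ and its complement $U_i^d$ (on which $|U_j \setminus N(v)| < k$). If both $|U_i^s|, |U_i^d| \ge k$ and $|U_j|$ is large enough (say $\ge 5k$), a Markov-type averaging argument over $w \in U_j$ produces a vertex~$w$ with $|N(w) \cap U_i| \ge k$ and $|U_i \setminus N(w)| \ge k$; adding such~$w$ to~$X$ then splits~$U_i$ into two large parts while, by the per-vertex condition applied to~$w$ itself, each other $U_l$ still contributes only one large part after the refinement, contradicting termination. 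Consequently one of $U_i^s, U_i^d$ has size $< k$, and I would remove it at a cost of at most $k-1$ vertices per ordered pair. Repeating the trimming symmetrically on the $U_j$-side yields the matching bound on degrees measured from $U_j$.

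Setting $\ell_\X = m$, $d_\X = k-1 \le k_\X$, and $c_\X = m + 2^m(k-1) + 2s^2(k-1)$, the trimmed induced subgraph $G'$ is an $(\ell_\X, d_\X)$-graph with $|V(G) \setminus V(G')| < c_\X$, as required. The main obstacle will be the averaging step: one must produce a single~$w \in U_j$ that simultaneously gives $|N(w) \cap U_i^d| \ge k$ and $|U_i^s \setminus N(w)| \ge k$, for which a union bound over the two Markov inequalities suffices provided $|U_j|$, $|U_i^s|$ and $|U_i^d|$ all exceed a fixed multiple of~$k$. Any class $U_j$ too small to support the averaging is simply absorbed into the discarded remainder without breaking the bound on~$c_\X$, so the whole construction proceeds with constants depending only on~$\X$.
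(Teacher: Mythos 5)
First, a caveat: the paper does not prove this lemma at all --- it is quoted verbatim from Balogh--Bollob\'as--Weinreich (\cite{speed}, Lemma~27) --- so your attempt can only be judged on its own merits, and on its own merits it has a genuine gap at its central step. You claim that termination of the greedy yields the per-vertex condition ``for every $v\notin X$ and every $j$, $\min\{|N(v)\cap U_j|,|U_j\setminus N(v)|\}<k$, for otherwise adding $v$ would raise the distinguished-class count''. That implication is false: the count of distinguished classes of size at least~$k$ is not monotone under refinement. Adding such a $v$ does split $U_j$ into two parts of size at least~$k$, but any other class $U_l$ with $k\le|U_l|\le 2k-2$ on which $v$ is not constant splits into two parts \emph{both} of size below~$k$ and drops out of the count, so the total need not increase and the greedy may legitimately stop while the per-vertex condition fails. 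The same oversight reappears inside your trimming argument, where you assert that after adding $w$ ``each other $U_l$ still contributes only one large part'' --- it may contribute none. Everything downstream (the Markov/averaging step, discarding the small side of each pair, the bound on $c_\X$) is fine \emph{given} the per-vertex condition, but that condition is exactly the hard part of the lemma, and protecting the extracted classes against this kind of interference from medium-sized classes is where the real work lies. Note also that the obvious repair --- replacing the count by a bounded potential such as $\sum_j\min\{\lfloor\log_2(|C_j|/k)\rfloor+1,m+1\}$ over classes of size at least~$k$, which does force a per-vertex condition at threshold roughly $2^m k$ --- delivers only $d_\X=O(2^m k)$ and therefore loses the clause $d_\X\le k_\X$ in the statement you are proving.

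Two smaller omissions would also need attention even if the key condition were established. An $(\ell,d)$-partition constrains each bag against itself: you need $\Delta(V_i,V_i)\le d$ or $\overline{\Delta}(V_i,V_i)\le d$, and the per-vertex fact that every vertex of $U_i$ has fewer than $k$ neighbours or fewer than $k$ non-neighbours inside $U_i$ does not by itself give such a uniform bound; one needs an additional counting argument (if both the low-degree and the low-codegree parts of $U_i$ were large, counting edges between them gives a contradiction) followed by trimming, and your proposal never treats the diagonal pairs. Similarly, $\Delta$ and $\overline{\Delta}$ are two-sided maxima, so after trimming $U_i$ against $U_j$ and $U_j$ against $U_i$ you must still exclude the mixed outcome ($U_i$ uniformly sparse towards $U_j$ while $U_j$ is uniformly dense towards $U_i$), which again requires an edge count valid only when both trimmed bags are large, with small bags absorbed into the discarded remainder. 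These are fixable with the tools you already use, but as written the proof is incomplete even modulo the main gap.
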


By removing all the small bags with fewer than $5\times 2^{\ell_\X} d_\X$ vertices,
which affects only the constant~$c_\X$,
we can actually assume that the graph~$G'$ is a \emph{strong} $(\ell_\X, d_\X)$-graph.
This observation allows us to strengthen Lemma~\ref{lem:2.10} as follows.

\begin{lemma} \label{finitedistinguishingnumber}
If $\X$ is a hereditary property with finite distinguishing number~$k_{\X}$,
then there exist absolute constants $\ell_{\X}$, $d_{\X}$ and $c_{\X}$
such that for all $G \in {\X}$, the graph~$G$ contains an induced subgraph~$G'$
such that $G'$~is a strong $(\ell_{\X}, d_{\X})$-graph and ${|V(G) \backslash V(G')| < c_{\X}}$.
\qed
\end{lemma}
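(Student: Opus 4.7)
The plan is to derive Lemma~\ref{finitedistinguishingnumber} directly from Lemma~\ref{lem:2.10} by discarding the few bags that are too small to meet the ``strong'' size threshold. Because the partition has at most $\ell_\X$ bags and each undersized bag contributes at most $5\times 2^{\ell_\X} d_\X - 1$ vertices, the total number of vertices removed in this step is bounded by a constant depending only on $\ell_\X$ and $d_\X$.

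More concretely, I would proceed as follows. First, invoke Lemma~\ref{lem:2.10} to obtain constants $\ell_\X$, $d_\X\leq k_\X$ and $c_\X^{(0)}$ such that every $G\in\X$ contains an induced subgraph $G''$ with an $(\ell_\X,d_\X)$-partition $\pi''=\{V_1,\dotsc,V_{\ell'}\}$ satisfying $|V(G)\setminus V(G'')|<c_\X^{(0)}$. Next, form $S=\bigcup\{V_i : |V_i|<5\times 2^{\ell_\X} d_\X\}$ and set $G'=G''[V(G'')\setminus S]$, with $\pi'$ the partition of $V(G')$ obtained by keeping only the bags of size at least $5\times 2^{\ell_\X} d_\X$ from $\pi''$.

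I then need to check two things. First, that $\pi'$ is still an $(\ell_\X,d_\X)$-partition of $G'$: this is immediate because removing whole bags and restricting to the retained ones does not increase any of the quantities $\Delta(V_i,V_j)$ or $\overline{\Delta}(V_i,V_j)$ for the surviving pairs, so the sparse/dense classification from $\pi''$ carries over. Hence $G'$ is a strong $(\ell_\X,d_\X)$-graph. Second, I need to control $|V(G)\setminus V(G')|$: by the triangle-type bound
\[
|V(G)\setminus V(G')| \;\leq\; |V(G)\setminus V(G'')| + |S| \;<\; c_\X^{(0)} + \ell_\X\cdot 5\times 2^{\ell_\X} d_\X,
\]
so defining $c_\X := c_\X^{(0)} + 5\ell_\X 2^{\ell_\X} d_\X$ yields the required absolute constant.

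There is essentially no obstacle here: the only point that needs a moment's thought is verifying that the sparse/dense property is preserved under bag deletion, which holds trivially because the relevant maxima are taken over a smaller set of vertices. The heredity of $\X$ guarantees $G'\in\X$ (since $G'$ is an induced subgraph of $G$), so the conclusion follows with the same $\ell_\X$, $d_\X$ as in Lemma~\ref{lem:2.10} and the enlarged constant $c_\X$ defined above.
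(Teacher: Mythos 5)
Your proposal is correct and follows essentially the same route as the paper, which obtains the lemma from Lemma~\ref{lem:2.10} precisely by discarding the bags with fewer than $5\times 2^{\ell_\X} d_\X$ vertices and absorbing the at most $\ell_\X\cdot 5\times 2^{\ell_\X} d_\X$ removed vertices into the constant $c_\X$. Your additional checks (that deleting whole bags preserves the sparse/dense classification, and the explicit new constant) are exactly the details the paper leaves implicit.
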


Finally, we will use this theorem:

\begin{theorem}[\cite{speed}, Theorem 28] \label{paths}
Let $\X$ be a hereditary property with $k_\X<\infty$.
Then $\X_n \geq n^{(1+o(1))n}$ if and only if for every $m$ there
exists a strong $(\ell_\X, d_\X)$-graph~$G$ in~$\X$ such that its sparsification~$\phi(G)$
has a connected component of order at least~$m$.
\qed
\end{theorem}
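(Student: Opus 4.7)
The plan is to prove both implications via counting, using Lemma~\ref{finitedistinguishingnumber} to reduce each $G \in \X$ to a strong $(\ell_\X, d_\X)$-graph plus a bounded number of exceptional vertices.

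For the forward direction I would argue contrapositively: suppose there is a constant $m$ such that for every strong $(\ell_\X, d_\X)$-graph $G \in \X$, every connected component of $\phi(G)$ has order at most $m$. Then I bound $\X_n$ by summing over four pieces of data describing $G \in \X$: (i)~the at most $c_\X$ exceptional vertices and their incidences, contributing $n^{O(1)}\cdot 2^{O(n)}$; (ii)~the strong $(\ell_\X, d_\X)$-partition of the remaining induced subgraph $G'$, contributing at most $\ell_\X^n$; (iii)~the density graph $H$ on this partition, contributing $2^{O(\ell_\X^2)}$; and (iv)~the sparsification $\phi(G')$, which together with $H$ reconstructs $G'$ and which, by hypothesis, has all components of order $\leq m$ and maximum degree $\leq \ell_\X d_\X$. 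A direct Stirling estimate (or the standard exponential-generating-function calculation for graphs with bounded components) shows that the number of labeled graphs on $N$ vertices with these constraints is at most $C^N\cdot N^{(1-1/m)N + o(N)}$ for a constant $C=C(m,\ell_\X,d_\X)$. Multiplying the four factors and absorbing $2^{O(n)} = n^{o(n)}$ gives $\X_n \leq n^{(1-1/m+o(1))n}$, which contradicts $\X_n \geq n^{(1+o(1))n}$ because $1/m$ is a positive constant.

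For the reverse direction I would use the large component to locate a structured infinite subfamily inside~$\X$. Given $G \in \X$ with $\phi(G)$ containing a component $C$ of order at least $m$, and since $\phi(G)$ has maximum degree at most $\ell_\X d_\X$, the connectedness of $C$ forces $C$ to contain an induced path whose length tends to infinity with $m$. Each vertex of this path lies in one of at most $\ell_\X$ bags, so repeated pigeonholing extracts an arbitrarily long induced sub-path whose bag-assignment pattern (and consequently the inter-bag density pattern prescribed by $H$) is periodic. Undoing the bipartite complementations of $\phi$ along this periodic path yields arbitrarily long induced subgraphs of $G$ of one of finitely many canonical types (a long induced path, a bipartite complement of a path, or a ``half-graph''-style structure). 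Since $\X$ is hereditary, all such graphs and their induced subgraphs lie in $\X$, and for each canonical type a standard EGF computation (parallel to the classical estimate $n^{(1+o(1))n}$ for linear forests) establishes that the resulting hereditary subclass has speed at least $n^{(1+o(1))n}$, yielding $\X_n \geq n^{(1+o(1))n}$.

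The main obstacle is the reverse direction: translating a purely ``topological'' fact about $\phi(G)$ (the existence of a large connected component) into a concrete factorial-speed subclass of $\X$ requires matching the interaction between the extracted induced path and the density graph~$H$ to one of a handful of canonical structured families, and then computing the speed of each. The Ramsey-type pigeonholing keeps the number of cases finite, so each case reduces to a routine generating-function estimate. By contrast, the forward-direction bookkeeping is straightforward once the four-factor decomposition is in place and one knows that bounded-degree graphs with all components of bounded order have speed strictly below $n^{(1+o(1))n}$.
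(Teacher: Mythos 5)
First, a point of reference: the paper does not prove this statement at all --- it is imported verbatim from \cite{speed} (Theorem 28) with a \emph{qed}, so your attempt can only be judged on its own merits. Your forward (contrapositive) direction is essentially sound and is the standard counting argument: the map sending $G$ to (exceptional vertices and their incidences, bag assignment of~$G'$, density graph, $\phi(G')$) is injective, $G'\in\X$ by heredity so all components of $\phi(G')$ have order at most~$m$, and the estimate $C^N N^{(1-1/m)N+o(N)}$ for graphs with components of order at most~$m$ gives $\X_n\le n^{(1-1/m+o(1))n}$, as needed.

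The reverse direction, however, has a genuine gap, precisely at the step you yourself flag as the main obstacle. (i)~The pigeonhole/Ramsey extraction does not deliver what you need: an arbitrarily long word over a bounded alphabet need not contain long periodic factors (square-free words over three letters exist), so you cannot extract a long \emph{contiguous} subpath with periodic bag pattern; and if you instead pass to a non-contiguous subsequence of the path, the backbone adjacencies vanish in~$\phi(G)$ and what survives is a blow-up of a short pattern --- for instance, many far-apart occurrences of one two-letter pattern may yield only graphs like $mK_2$ and its variants, whose hereditary closure has speed about $n^{n/2}$, far below $n^{(1+o(1))n}$. So this extraction can genuinely lose the factorial speed. (ii)~The claim that the extracted structures fall into three canonical types (path, bipartite complement of a path, half-graph) is unjustified and false in general: for an arbitrary density graph~$H$ on up to~$\ell_\X$ letters the factor graphs $G_{w,H}(1,\dotsc,k)$ form an infinite variety --- this is exactly why the minimal classes $\P(w,H)$ of Section~\ref{sec:finite} form an infinite family --- and the statement you actually need, that any hereditary class containing arbitrarily long factors $G_{w,H}(1,\dotsc,k)$ has speed at least $n^{(1+o(1))n}$, is essentially the content of the theorem itself; note that Proposition~\ref{prop:pwh} in this paper deduces the speed of $\P(w,H)$ \emph{from} Theorem~\ref{paths}, so appealing to such results here would be circular. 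To close the gap you would need a lower-bound count valid for arbitrary $H$ and~$w$, e.g.\ by showing that long factor graphs have automorphism groups of size at most $c^n$ (yielding at least $n!/c^n$ labelled copies) or by an explicit injection of orderings/partitions of $[n]$ into induced subgraphs of a long factor; the ``routine EGF computation'' you invoke does not supply this.
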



\section{Structure of minimal classes above Bell}
\label{sec:structure}

In this section, we describe minimal classes with speed above the Bell number.
In~\cite{jump}, Balogh, Bollob\'{a}s and Weinreich characterised
all minimal classes with infinite distinguishing number.  In
Section~\ref{sec:infinite}
we report this result and prove additionally that each of these classes can be
characterised by finitely many forbidden induced subgraphs. Then in Section~\ref{sec:finite}
we move on to the case of finite distinguishing number, which had been
left open in~\cite{jump}.

\subsection{Infinite distinguishing number}
\label{sec:infinite}

\begin{theorem}[Balogh--Bollob\'{a}s--Weinreich~\cite{jump}]
\label{thm:infinite}
Let $\X$ be a hereditary graph property with $k_\X=\infty$.
Then $\X$ contains at least one of the following (minimal) classes:
\begin{compactenum}[(a)]
\item the class~$\K_1$ of all graphs each of whose connected components is a clique;
\item the class~$\K_2$ of all star forests;
\item the class~$\K_3$ of all graphs whose vertex set can be split into an independent set~$I$
	and a clique~$Q$ so that every vertex in~$Q$ has at most one neighbour in~$I$;
\item the class~$\K_4$ of all graphs whose vertex set can be split into an independent set~$I$
	and a clique~$Q$ so that every vertex in~$I$ has at most one neighbour in~$Q$;
\item the class~$\K_5$ of all graphs whose vertex set can be split into two cliques~$Q_1$,~$Q_2$
	so that every vertex in~$Q_2$ has at most one neighbour in~$Q_1$;
\item the class $\K_6$ of all graphs whose vertex set can be split into two independent sets~$I_1$,~$I_2$
	so that the neighbourhoods of the vertices in~$I_1$ are linearly ordered by inclusion
	(that is, the class of all \emph{chain graphs});
\item the class $\K_7$ of all graphs whose vertex set can be split into an independent set~$I$
	and a clique~$Q$ so that the neighbourhoods of the vertices in~$I$ are linearly ordered by inclusion
	(that is, the class of all \emph{threshold graphs});
\item the class~$\overline{\K_i}$ of all graphs whose complement belongs to~$\K_i$ as above,
	for some~$i\in\{1,2,\dotsc,6\}$ (note that the complementary class of~$\K_7$ is $\K_7$ itself).
\end{compactenum}
\end{theorem}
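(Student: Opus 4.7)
The plan is to unfold the assumption $k_\X = \infty$ into an arbitrarily large combinatorial structure and then use Ramsey-type reductions to extract from it an induced subgraph that manifestly lies in one of the thirteen listed classes. Concretely, for parameters $t, k, m$ as large as we wish I would pick $G \in \X$ together with a set $X = \{v_1, \dotsc, v_t\}$ and pairwise disjoint $U_1, \dotsc, U_m \subseteq V(G)$, each of size at least~$k$, with the $U_i$ distinguished by~$X$. Let $N_i \subseteq X$ denote the common neighbourhood of $U_i$ in~$X$; the $N_i$ are $m$ distinct subsets.

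The first block of work is to homogenize. By Ramsey, pass to a large $X' \subseteq X$ that is either a clique or an independent set in~$G$, taking $t$ large enough that many of the restricted patterns $N_i \cap X'$ remain distinct (and renaming so that they still are). By Ramsey inside each $U_i$ and pigeonholing over~$i$, pass to large subsets $U_i' \subseteq U_i$ that are all cliques or all independent sets. Finally, by bipartite Ramsey applied iteratively across pairs $(U_i', U_j')$ (and a further pigeonhole over the pair-types), assume that every such pair is either complete bipartite, edgeless, or of one of a small number of canonical partial shapes. These steps cost iterated-exponential losses in the parameters, but cost nothing in substance because $t, k, m$ were chosen arbitrarily.

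Now I would analyze the system $\mathcal{N} = \{N_1, \dotsc, N_m\}$ of distinct subsets of~$X'$. Using Dilworth/Mirsky, extract either a long chain $N_{i_1} \subsetneq N_{i_2} \subsetneq \dotsb$ or a long antichain. In the antichain case, apply a sunflower lemma or a further Ramsey-style reduction to obtain disjoint symmetric differences, or a matching-like pattern in which each $N_i$ contains a private vertex. Combined with the clique/independent dichotomy on~$X'$, the clique/independent dichotomy on the~$U_i'$, and the homogenized inter-$U_i'$ adjacencies, each branch of this case analysis produces an induced subgraph that is an arbitrarily large member of exactly one of $\K_1, \dotsc, \K_7, \overline{\K_1}, \dotsc, \overline{\K_6}$. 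For example, $X'$ independent with the $N_i$ forming a matching and each $U_i'$ a singleton yields a star forest ($\K_2$); $X'$ a clique with the $N_i$ forming a chain yields a large threshold graph ($\K_7$); and the remaining combinations fill out the complementary cases.

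Since there are only thirteen possible outcomes, by a final pigeonhole there is a single class $\K \in \{\K_1, \dotsc, \overline{\K_6}\}$ that catches the argument for every choice of $(t,k,m)$. Each such $\K$ has the property that every finite member embeds into an arbitrarily large member of $\K$ (indeed, the classes are defined by simple closure rules), so heredity of $\X$ upgrades "$\X$ contains arbitrarily large induced subgraphs from $\K$'' to $\K \subseteq \X$, completing the proof. The main obstacle I anticipate is the bookkeeping in the set-system step: one must verify that every combination of the dichotomies on $X'$, on the $U_i'$, on the inter-$U_i'$ pattern, and on the shape of $\mathcal{N}$ matches one of the thirteen templates, with no combination slipping through — minimality of the classes is precisely what prevents double-counting, but exhaustiveness requires care.
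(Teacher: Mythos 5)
You should first be aware that the paper offers no proof of this statement: Theorem~\ref{thm:infinite} is quoted from Balogh--Bollob\'as--Weinreich~\cite{jump}, so there is no in-paper argument to compare against and your sketch has to stand on its own. As it stands it has genuine gaps, the most serious being the order of homogenization. After applying Ramsey to $G[X]$ you keep a homogeneous subset $X'$ whose location you cannot control, and nothing guarantees that the traces $N_i\cap X'$ remain distinct -- they can collapse entirely (for instance if the $N_i$ are distinct singletons), no matter how large $t$ is chosen; ``taking $t$ large enough'' does not fix this. The workable order is the reverse: first extract from the $m$ distinct sets $N_1,\dotsc,N_m$ a canonical subpattern witnessed by a \emph{small} vertex set -- a long chain under inclusion, a long ``matching'' (each chosen set owning a private vertex), or a long ``co-matching'' (each chosen set missing a private vertex) -- and only then Ramsey-homogenize that witness set and the bags. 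Your Dilworth/sunflower step does not deliver this trichotomy as stated: the sunflower lemma needs sets of bounded size, and an antichain of very large sets (e.g.\ $N_i=X\setminus\{x_i\}$) is exactly the co-matching case that your sketch glosses over under ``disjoint symmetric differences.''

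The second gap is that the exhaustive case analysis -- verifying that every combination of clique/independent on the witness set, clique/independent inside the bags, complete/empty between bags, and chain/matching/co-matching between the witness set and the bags produces arbitrarily large \emph{universal} members of one of exactly these thirteen classes, with no combination escaping -- is the actual content of the theorem, and you only assert that it works out. Note that $\K_1$ and $\overline{\K_1}$ arise from the inter-bag structure alone, while degenerate combinations (independent bags, empty between them, trivial trace pattern) give only classes below the Bell number, so minimality of the target classes does not excuse the bookkeeping; it has to be carried out. Your final step, upgrading ``$\X$ contains arbitrarily large members of $\K$'' to $\K\subseteq\X$ by heredity, is sound only if the construction yields universal members of $\K$ (a long staircase for $\K_6$, many large stars for $\K_2$, etc.), which again depends on doing that case analysis explicitly rather than by appeal to a final pigeonhole.
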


As an aside, it is perhaps worth noting that each of the minimal classes
admits an infinite universal graph.
To be specific, $\K_1$~is the age (the class of all finite induced subgraphs)
of~$U_1$, the disjoint union of $\omega$ cliques, each of order~$\omega$.
The remaining universal graphs are depicted in Figure~\ref{fig:uni};
a grey oval indicates a clique (of order~$\omega$).

\begin{figure}[h]
\centerline{\includegraphics{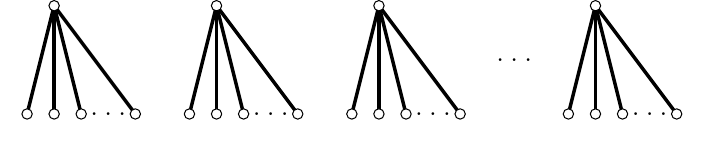}\hfil
\includegraphics{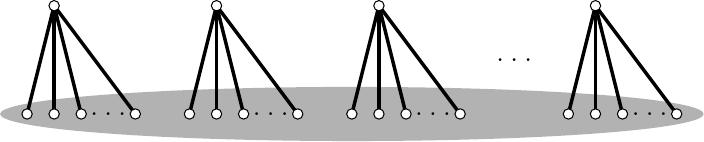}}
\centerline{$U_2$\hfil\hfil$U_3$}
\bigskip
\centerline{\includegraphics{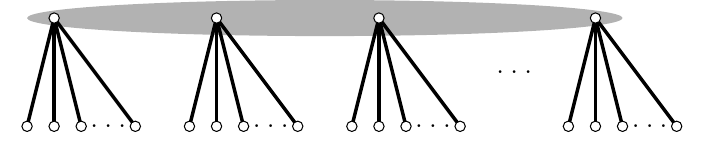}\hfil
\includegraphics{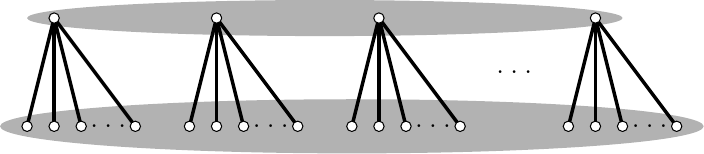}}
\centerline{$U_4$\hfil\hfil$U_5$}
\bigskip
\centerline{\includegraphics{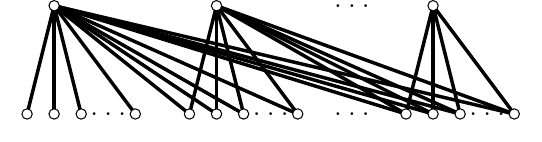}\hfil
\includegraphics{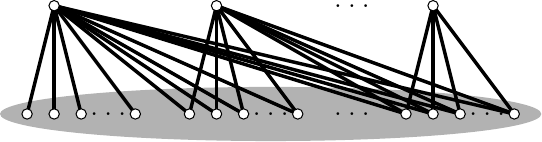}}
\centerline{$U_6$\hfil\hfil$U_7$}
\caption{The universal graphs.}
\label{fig:uni}
\end{figure}

Aiming to prove that each of the classes above is defined by
forbidding finitely many induced subgraphs, we first state an older
result by Földes and Hammer about split graphs of which we make use in our proof.
A \emph{split graph} is a graph whose vertex set can be split into
an independent set and a clique.

\begin{theorem}[\cite{FolHam:Split-graphs}]
\label{thm:split}
The class of all split graphs is exactly the class $\Free(2K_2,C_4,C_5)$.
\end{theorem}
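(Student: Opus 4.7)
The plan is to prove the two inclusions separately.

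For the easier direction, that every split graph is $\{2K_2, C_4, C_5\}$-free, I would fix a split partition $V(G) = I \cup Q$ with $I$ independent and $Q$ a clique, and observe that any induced copy of $H \in \{2K_2, C_4, C_5\}$ places at most $\omega(H)$ of its vertices in $Q$ and at most $\alpha(H)$ in $I$. All three forbidden graphs satisfy $\omega(H) = \alpha(H) = 2$, so for $H = C_5$ this already fails as $\omega + \alpha < 5$. For $H = 2K_2$ and $H = C_4$, the two vertices in $Q$ must be mutually adjacent in $H$ and so form one of its edges; the remaining two vertices would then lie in $I$ as a non-adjacent pair, which in both graphs contradicts the actual adjacency structure on those pairs.

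For the harder direction I would fix a $\{2K_2, C_4, C_5\}$-free graph $G$, take a maximum clique $Q$, and aim to show $I := V(G) \setminus Q$ is independent. Supposing for contradiction $xy \in E(G[I])$, I would partition $Q$ into
\[
A = N(x) \cap N(y) \cap Q,\ B = (N(x) \setminus N(y)) \cap Q,\ C = (N(y) \setminus N(x)) \cap Q,\ D = Q \setminus (N(x) \cup N(y)).
\]
Maximality of $Q$ gives $B \cup D \neq \emptyset$ and $C \cup D \neq \emptyset$. Three short observations handle most of the case analysis: two vertices of $D$ together with $x, y$ induce $2K_2$, so $|D| \leq 1$; one vertex each from non-empty $B$ and $C$ together with $x, y$ induce $C_4$ on $\{x,b,y,c\}$ in cyclic order, so $B = \emptyset$ or $C = \emptyset$; and in the sub-case $B = C = \emptyset$ (which forces $D = \{d\}$), the set $(Q \setminus \{d\}) \cup \{x, y\}$ would be a strictly larger clique. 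Up to swapping the roles of $x$ and $y$, this leaves only the residual case $C = \emptyset$, $D = \{d\}$, $B \neq \emptyset$.

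To treat this residual case, I would perform the clique exchange $Q \mapsto Q' := (Q \setminus \{d\}) \cup \{x\}$, which is again a maximum clique; now both $y$ and $d$ lie in $V \setminus Q'$ and are non-adjacent. If $V \setminus Q'$ is independent the split partition $(V \setminus Q', Q')$ is already obtained; otherwise, to prevent infinite descent I would choose the initial $Q$ not only maximum but additionally minimising the number of edges induced on $V \setminus Q$ among all maximum cliques. Under this double extremality the exchange cannot decrease the edge count, which forces enough neighbourhood structure on $d$ and $x$ that a careful five-vertex analysis on $\{x, y, d\}$ augmented by $b \in B$ and an auxiliary vertex $z \in V \setminus Q$ produces an induced $C_5$. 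The main obstacle is precisely this terminal case: the first three observations exploit only $2K_2$- and $C_4$-freeness, and the genuine use of $C_5$-freeness enters only at the end, together with the somewhat non-obvious double-extremality choice of $Q$ that makes the five-vertex argument close.
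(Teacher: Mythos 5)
The paper offers no proof of this theorem: it is quoted directly from F\"oldes and Hammer, so your argument can only be compared with the classical proof (as in Golumbic's book), and in fact your outline follows exactly that classical route. The forward direction is fine, and the bulk of the converse is correct: the partition of the maximum clique $Q$ into $A,B,C,D$, the observations that $|D|\le 1$ (via $2K_2$), that $B=\emptyset$ or $C=\emptyset$ (via $C_4$; note the induced cycle on $\{x,b,y,c\}$ has cyclic order $x,b,c,y$, not $x,b,y,c$), the elimination of $B=C=\emptyset$, and the extremal choice of $Q$ (maximum and, among maximum cliques, minimising the number of edges on $V\setminus Q$) together with the exchange $Q'=(Q\setminus\{d\})\cup\{x\}$ are all exactly the standard steps, correctly executed.

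The gap is in your terminal step. The claim that ``a careful five-vertex analysis on $\{x,y,d,b,z\}$ produces an induced $C_5$'' is not true for an arbitrary neighbour $z$ of $d$ in $S=V\setminus Q$: if $z$ happens to be adjacent to both $x$ and $b$ (and not to $y$), those five vertices induce a diamond on $\{x,b,z,d\}$ with $y$ pendant at $x$ (or a diamond plus a triangle if $zy\in E$), and this contains no induced $2K_2$, $C_4$ or $C_5$, so no contradiction can be extracted from them. What is missing is that the contradiction in this last case is a counting one, not a single forbidden configuration. The correct closure is: for every $z\in N(d)\cap S$ one must have $zx\in E$, because otherwise $\{x,y,d,z\}$ induces $2K_2$ when $zy\notin E$, while if $zy\in E$ then $\{x,y,z,b\}$ induces $C_4$ when $zb\in E$ and $b$--$x$--$y$--$z$--$d$ induces $C_5$ when $zb\notin E$. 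Hence $N(d)\cap S\subseteq N(x)\cap S$, and since $y\in N(x)\cap S$ but $y\notin N(d)$, the vertex $d$ has strictly fewer neighbours in $V\setminus Q'$ than $x$ has in $V\setminus Q$, contradicting your edge-minimality inequality $e(G[V\setminus Q'])\ge e(G[V\setminus Q])$. So the plan is repairable with precisely this argument (the $C_5$ appears only in one sub-branch), but as stated the final step would not close.
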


Before showing the characterisation of the classes $\K_1$--$\K_6$ in
terms of forbidden induced subgraphs, we introduce some of the less commonly
appearing graphs:
the \emph{claw}~$K_{1,3}$,
the \emph{3-fan}~$F_3$,
the \emph{diamond}~$K_4^-$,
and the graph~$H_6$ (Fig.~\ref{fig:small}).

\begin{figure}[h]
	\begin{center}
		\begin{tikzpicture}
	  		[scale=1,auto=left]
			\node[vertex] (x1) at (0,1)   { };
			\node[vertex] (x2) at (1,2)   { };
			\node[vertex] (x3) at (1,1) { };
			\node[vertex] (x4) at (1,0) { };
			
			\foreach \from/\to in {x1/x2,x1/x3,x1/x4}
	    	\draw (\from) -- (\to);
			\coordinate [label=center:$K_{1,3}$] (K_{1,3}) at (0.5,-1);			
	
			\node[vertex] (y1) at (3,0)   { };
			\node[vertex] (y2) at (3,2)   { };
			\node[vertex] (y3) at (4,1) { };
			\node[vertex] (y4) at (5,0) { };
			\node[vertex] (y5) at (5,2)   { };
					
			\foreach \from/\to in {y1/y2,y1/y3,y1/y4,y2/y3,y3/y4,y3/y5,y4/y5}
	    	\draw (\from) -- (\to);
			\coordinate [label=center:$F_3$] (F_3) at (4,-1);

			\node[vertex] (z1) at (7,1)   { };
			\node[vertex] (z2) at (8,2)   { };
			\node[vertex] (z3) at (8,0) { };
			\node[vertex] (z4) at (9,1) { };
					
			\foreach \from/\to in {z1/z2,z1/z3,z1/z4,z2/z4,z3/z4}
	    	\draw (\from) -- (\to);
			\coordinate [label=center:$K_4^-$] (K_4^-) at (8,-1);

			\node[vertex] (w1) at (11,2)   { };
			\node[vertex] (w2) at (11,0)   { };
			\node[vertex] (w3) at (12,1) { };
			\node[vertex] (w4) at (13,1) { };
			\node[vertex] (w5) at (14,2)   { };
			\node[vertex] (w6) at (14,0)   { };
			
			\foreach \from/\to in {w1/w3,w2/w3,w3/w4,w4/w5,w4/w6}
	    	\draw (\from) -- (\to);
			\coordinate [label=center:$H_6$] (H_6) at (12.5,-1);			

		\end{tikzpicture}
	\end{center}
	\caption{Some small graphs}
\label{fig:small}
\end{figure}
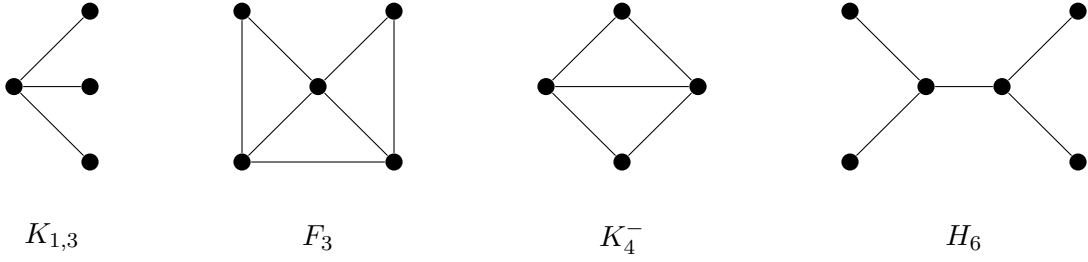

\begin{theorem}
\label{thm:finforb}
Each of the classes of Theorem~\ref{thm:infinite} is defined by finitely many forbidden induced subgraphs.
\end{theorem}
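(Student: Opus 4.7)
The plan is to exhibit, for each of the thirteen minimal classes, an explicit finite list of forbidden induced subgraphs. Since $G\in\overline{\K_i}$ if and only if $\overline{G}\in\K_i$, the forbidden induced subgraphs of $\overline{\K_i}$ are exactly the complements of those of $\K_i$, and it therefore suffices to treat the seven classes $\K_1,\dots,\K_7$.

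For four of these classes a finite characterisation is classical or immediate. First, $\K_1=\Free(P_3)$, because every connected component of a $P_3$-free graph is a clique. For $\K_2=\Free(P_4,K_3,C_4)$: any cycle of length at least five contains an induced $P_4$, so a $\{P_4,K_3,C_4\}$-free graph is a forest, and a $P_4$-free forest has diameter at most~$2$ in each component, hence is a star forest. Finally, the classes $\K_6$ (chain graphs) and $\K_7$ (threshold graphs) admit the classical finite characterisations $\Free(2K_2,K_3,C_5)$ and $\Free(2K_2,P_4,C_4)$; longer odd cycles $C_{2k+1}$ with $k\ge 3$ already contain an induced $2K_2$, so no further forbidden graphs are needed.

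The remaining three classes $\K_3,\K_4,\K_5$ are all specialisations of split-type classes by an extra local constraint. By the Földes--Hammer theorem (Theorem~\ref{thm:split}) split graphs are $\Free(2K_2,C_4,C_5)$, so these three graphs must appear on the forbidden lists of $\K_3$ and $\K_4$. The ``at most one neighbour on the opposite side'' condition is then captured by adding a small number of further forbidden subgraphs chosen from Figure~\ref{fig:small} (the claw $K_{1,3}$, the fan $F_3$, the diamond $K_4^-$, and~$H_6$). For $\K_5$, the requirement of a two-clique partition is encoded by $3K_1$ and a few small odd cycles, with the matching constraint again encoded by graphs from Figure~\ref{fig:small}. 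In each case the forward inclusion is a direct verification that no listed graph admits a partition of the required type.

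The principal obstacle is the reverse inclusion for $\K_3,\K_4,\K_5$: given a graph $G$ avoiding the full forbidden list, we must build a valid partition. The strategy is to begin from a split (or two-clique) partition $(I,Q)$ guaranteed by the classical part of the list, and, if the neighbourhood constraint fails -- say some $q\in Q$ has two neighbours $i_1,i_2\in I$ -- try to rectify it by moving vertices between $I$ and~$Q$. A move of $q$ into $I$ is blocked only by a third neighbour of $q$ in $I$ or by a vertex of $Q$ non-adjacent to $q$, and similarly for moving $i_1$ or $i_2$ into~$Q$. Chasing these failure conditions forces specific adjacencies among $q,i_1,i_2$ and a handful of auxiliary vertices, and a short case analysis shows that these adjacencies always assemble into an induced copy of one of the small forbidden subgraphs. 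Once all seven classes are handled, the complementary classes $\overline{\K_1},\dots,\overline{\K_6}$ are described by the complements of the respective forbidden lists, completing the theorem.
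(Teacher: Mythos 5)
Your treatment of $\K_1$, $\K_2$, $\K_6$, $\K_7$ and the complementation reduction matches the paper, and your use of the F\"oldes--Hammer theorem for the split-type classes is also the paper's starting point. But the heart of the theorem is precisely the explicit finite lists for $\K_3$, $\K_4$ and $\K_5$ together with a proof that they suffice, and this is where your proposal has a genuine gap: the reverse inclusions are deferred to an unexecuted ``short case analysis'', and for $\K_5$ the list you describe cannot be right. You claim the two-clique partition is ``encoded by $3K_1$ and a few small odd cycles'' with the degree constraint handled by graphs from Figure~\ref{fig:small}; however, co-bipartiteness by itself needs the infinite family $\overline{C_{2k+1}}$, so a finite list only works if the long complements of odd cycles are subsumed by the other forbidden graphs, and those other graphs are not from Figure~\ref{fig:small}. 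The paper works in the complement and shows $\overline{\K_5}=\Free(K_3, C_5, P_4{+}K_1, 2K_2{+}K_1, C_4{+}K_2, C_4{+}2K_1, H_6)$; of these only $H_6$ appears in the figure, and it is the graphs $P_4{+}K_1$, $2K_2{+}K_1$, etc.\ that both kill the long odd cycles and enforce the co-degree condition. Without an explicit list of this kind, the statement for $\K_5$ is not proved, and the case analysis needed there is substantially longer than for $\K_3$, $\K_4$ (in the paper it occupies about a page, distinguishing the number of nontrivial components and, in the connected case, chasing adjacencies to a shortest-path/$P_6$ contradiction).

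There is also a structural weakness in your ``local repair'' strategy. Moving a violating vertex between $I$ and $Q$ can create new violations, so without an extremal choice or a progress measure the argument does not terminate as stated. The paper avoids this for $\K_3$ and $\K_4$ by fixing once and for all a split partition in which $Q$ is a \emph{maximal} clique: then a vertex $u\in Q$ (resp.\ $u\in I$) with two neighbours on the other side immediately yields non-neighbours inside $Q$, and a two- or three-line check produces a claw/3-fan (for $\K_3$) or a diamond (for $\K_4$). For $\K_5$ no repair of the partition is available at all: in the connected bipartite complement the bipartition is forced, and the real content is proving that for any two vertices in different parts one of them has at most one non-neighbour in the opposite part -- a global argument, not a vertex-moving one. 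So your outline is on the right track for $\K_3$ and $\K_4$ (modulo replacing ``repair'' by the maximal-clique choice and actually doing the case check), but the $\K_5$ case needs a different forbidden list and a genuinely different, longer argument.
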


\begin{proof}
First, observe that if we define $\overline\X$ as the class of the
complements of all graphs in~$\X$, then $\overline{\Free(\F)}=\Free(\overline\F)$.
Hence if each class~$\K_i$ is defined by finitely many forbidden induced subgraphs, then so is each~$\overline{\K_i}$.

(a) $\K_1=\Free(P_3)$: It is trivial to check that $P_3$ does not belong to $\K_1$, 
and any graph not containing an induced $P_3$ must be a collection of cliques.

(b) $\K_2=\Free(K_3,P_4,C_4)$: Obviously, none of the graphs $K_3$, $P_4$, $C_4$ belongs to~$\K_2$.
Let $G\in\Free(K_3,P_4,C_4)$. Since every cycle of length at least~$5$
contains~$P_4$, $G$~does not contain any cycles; thus $G$~is a forest.
The absence of a $P_4$ implies that the diameter of any connected component of~$G$ is at most~$2$,
hence $G$~is a star forest.

(c) $\K_3=\Free(\F)$ for $\F=\{2K_2, C_4, C_5, K_{1,3}, F_3\}$:
It is easy to check that none of the forbidden graphs belong to~$\K_3$. 
Let $G\in\Free(\F)$. By Theorem~\ref{thm:split}, $G$~is a split graph.
Split~$G$ into a maximal clique~$Q$ and an independent set~$I$. Suppose, for the sake of contradiction,
that $Q$~contains a vertex~$u$ with two neighbours~$a,b\in I$.
As we took $Q$ to be a maximal clique, $a$~has a non-neighbour~$v$
and $b$~has a non-neighbour~$w$ in~$Q$.  If $a$, $w$ are not adjacent,
then the vertices $a,b,u,w$ induce a claw in~$G$; if $b$, $v$ are
not adjacent, then the vertices $a,b,u,v$ induce a claw in~$G$;
otherwise the vertices $a,b,u,v,w$ induce a 3-fan in~$G$. In either case
we get a contradiction.

(d) $\K_4=\Free(\F)$ for $\F=\{2K_2, C_4, C_5, K_4^-\}$:
Again, it is easy to check that none of the forbidden graphs belong to~$\K_4$. 
Let $G\in\Free(\F)$. By Theorem~\ref{thm:split}, $G$~is a split graph.
Just like before, split $G$ into a maximal clique~$Q$ and an independent set~$I$.
Suppose that some vertex~$u$ in~$I$ has two neighbours~$a,b$ in~$Q$. By maximality
of~$Q$, $u$~also has a non-neighbour~$c$ in~$Q$. But then the vertices $a,b,c,u$
induce a~$K_4^-$ in~$G$, a contradiction.

(e) The class~$\overline{\K_5}$ of the complements of the graphs
in~$\K_5$ is characterised as the class of all (bipartite) graphs
whose vertex set can be split into independent sets~$I_1$,~$I_2$
so that each vertex in~$I_2$ has at most one non-neighbour in~$I_1$.
We show that $\overline{\K_5}=\Free(\F)$ for $\F=\{K_3, C_5, P_4+K_1, 2K_2+K_1, C_4+K_2, C_4+2K_1, H_6\}$.
The reader will kindly check that indeed no graph in~$\F$ belongs to~$\overline{\K_5}$.

Consider some $G\in\Free(\F)$; we will show that $G\in\overline{\K_5}$. Observe that $\F$~prevents~$G$
from having an odd cycle, thus $G$~is bipartite. We distinguish
three cases depending on the structure of the connected components of~$G$.

First, suppose that $G$ has at least two non-trivial connected components
(that is, connected components that are not just isolated vertices).
Because $G$~is $(2K_2+K_1)$-free, it only has two connected components in all.
Being $C_4$- and $P_4$-free, each component is necessarily a star. 
Observe that any graph consisting of one or two stars belongs to~$\overline{\K_5}$.

Next assume that $G$ has only one non-trivial connected component and some isolated vertices.
The non-trivial component is bipartite and $P_4$-free, so it is a biclique.
If this biclique contains~$C_4$, then $G$~only contains one other isolated vertex;
any graph consisting of a biclique and one isolated vertex is in~$\overline{\K_5}$.
Otherwise the biclique is a star; any graph consisting of a star and one or more
isolated vertices belongs to~$\overline{\K_5}$.

Finally, consider $G$ that is connected. We will show that for any two vertices of $G$ in different parts, 
one of them must have at most one non-neighbour in the opposite part. Suppose this is not true and there are 
$x,y\in V(G)$ in different parts such that both $x$ and~$y$ have at least two non-neighbours in the opposite part. 
Assume first that $x$ and~$y$ are adjacent.
Let $a$ and $b$ be two non-neighbours of~$x$, and let $c$ and $d$ be two non-neighbours of~$y$. 
Then the graph induced by $a,b,c$ and $d$ cannot be a $C_4$, $P_4$, $P_3+K_1$, $2K_2$ or $K_2+2K_1$, 
because $G$ is $(P_4+K_1, 2K_2+K_1, C_4+K_2)$-free. Hence $a,b,c$ and $d$ must induce a $4K_1$. 
As $G$ is connected, $a$ must have a neighbour, say $w$. However, the vertices $x,y,a,c$ and $w$ induce 
a $P_4+K_1$ if $y$ and $w$ are adjacent and they induce a $2K_2+K_1$ if $y$ and $w$ are not adjacent. 
Therefore, $x$ and $y$ must be non-neighbours.

\looseness-1
By assumption, $x$~has another non-neighbour~$a\ne y$ in the opposite part,
and $y$~has another non-neighbour~$b\ne x$ in the opposite part.
As $G$ is connected, $x$ must have a neighbour, say $u$. If $a$ and $b$ are adjacent, then  $x,y,u,a$ and $b$ induce a $2K_2+K_1$ 
if $u$ is not adjacent to $b$, and they induce a $P_4+K_1$ if $u$ is adjacent to $b$. 
Both cases lead to a contradiction as $G$ is $(P_4+K_1, 2K_2+K_1)$-free, hence $a$ and $b$ cannot be adjacent. 
Now, as $G$ is connected, $y$ must also have a neighbour, say $v$. 
If $u$ is not adjacent to $b$, then $x,y,u,v$ and $b$ induce either a $2K_2+K_1$ or a $P_4+K_1$, hence $u$ and $b$ must be adjacent. 
By a symmetric argument, $v$ is adjacent to $a$.  
Now $u$ and $v$ must be non-adjacent: otherwise $x,y,u,v,a$ and $b$ induce an $H_6$.

This argument shows that any neighbour of $x$ must also be a neighbour of $b$,  any neighbour of $y$ must also be a neighbour of $a$, 
and that any neighbour of $x$ cannot be adjacent to any neighbour of $y$. This means that the shortest induced path between $x$ and $y$ 
must contain a $P_6$, which is a contradiction as $G$ is $(P_4+K_1)$-free. Therefore, either $x$ or $y$ must have at most one non-neighbour. 
This implies that $G$~can be split into two independent sets $I_1$, $I_2$
such that every vertex in $I_2$ has at most one non-neighbour in~$I_1$,
so $G$~belongs to~$\overline{\K_5}$.

(f) Chain graphs are characterised by finitely many forbidden induced subgraphs by a result of
Yannakakis~\cite{Yan:The-complexity-of-the-partial}; namely, $\K_6=\Free(2K_2, K_3, C_5)$.

(g) Threshold graphs are characterised by finitely many forbidden induced subgraphs by a result of 
Chv\'{a}tal and Hammer~\cite{threshold-graphs};
namely, $\K_7=\Free(2K_2, P_4, C_4)$.
\end{proof}

\subsection{Finite distinguishing number}
\label{sec:finite}

In this section we provide a characterisation of the minimal
classes for the case of finite distinguishing number~$k_\X$.
It turns out that these minimal classes
consist of $(\ell_\X,d_\X)$-graphs, that is, the vertex set of
each graph is partitioned into at most~$\ell_\X$~bags and dense
pairs are defined by a density graph~$H$ (see Lemma~\ref{finitedistinguishingnumber}).
The condition of Theorem~\ref{paths} is enforced by long paths
(indeed, an infinite path in the infinite universal graph).
Thus actually $d_\X\le2$ for the minimal classes~$\X$.

\bigskip

Let $A$ be a finite alphabet.
A \emph{word} is a mapping $w: S \rightarrow A$, where $S=\{1,2,\ldots, n\}$ for some $n \in \mathbb{N}$ or $S=\mathbb{N}$;
$|S|$~is the \emph{length} of~$w$, denoted by~$|w|$.
We write $w_i$ for~$w(i)$, and we often use the notation
$w=w_1w_2w_3\ldots w_n$ or $w=w_1w_2w_3\ldots$.
For $n\le m$ and $w=w_1w_2\ldots w_n$, $w'=w'_1w'_2\ldots w'_m$ (or $w'=w'_1w'_2\ldots$),
we say that $w$~is a \emph{factor} of~$w'$ if there exists
a non-negative integer~$s$ such that $w_i=w'_{i+s}$ for $1\le i\le n$;
$w$~is an \emph{initial segment} of~$w'$ if we can take $s=0$.

Let $H$ be an undirected graph with loops allowed and with vertex set $V(H)=A$,
and let $w$ be a (finite or infinite) word over the alphabet~$A$.
For any increasing sequence $u_1 < u_2 < \dotsb <u_m$ of positive
integers such that $u_m\le|w|$, define $G_{w,H}(u_1, u_2, \ldots, u_m)$ to be the graph
with vertex set $\{u_1, u_2, \dotsc, u_m\}$ and an edge between
$u_i$ and $u_j$ if and only if 
\begin{compactitem}
\item either $|u_i-u_j|=1$ and $w_{u_i}w_{u_j} \notin{E(H)}$,
\item or $|u_i-u_j| > 1$ and $w_{u_i}w_{u_j} \in E(H)$.
\end{compactitem}

Let $G=G_{w,H}(u_1, u_2, \ldots, u_m)$ and define $V_a=\{{u_i} \in V(G): w_{u_i}=a\}$ for any $a \in A$.
Then $\pi=\pi_{w}(G)=\{V_a : a \in A \}$ is an $(|A|,2)$-partition,
and so $G$~is an $(|A|,2)$-graph.
Moreover, $\psi(G,\pi,H)$~is a linear forest whose paths are formed by the 
segments of consecutive integers within the set $\{u_1, u_2, \ldots, u_m\}$.
This partition $\pi_{w}(G)$ is called the \emph{letter partition} of~$G$.

\begin{definition}
Let $H$ be an undirected graph with loops allowed and with vertex set $V(H)=A$,
and let $w$ be an infinite word over the alphabet~$A$.
Define $\P(w, H)$ to be the hereditary class consisting of the
graphs $G_{w, H}(u_1, u_2, \ldots, u_m)$ for all finite increasing
sequences $u_1 < u_2 < \dotsb < u_m$ of positive integers.
\end{definition}

As we shall see later, all classes $\P(w,H)$ are above the Bell number.
More importantly, all minimal classes above the Bell number have the form $\P(w,H)$
for some $w$ and~$H$.
Our goal here is firstly to describe sufficient conditions on the
word~$w$ under which $\P(w,H)$ is a minimal class above the Bell number;
moreover, we aim to prove that any hereditary class above the Bell
number with finite distinguishing number contains the class $\P(w,H)$
for some word~$w$ and graph~$H$.
We start by showing that these classes indeed have finite distinguishing number.

\begin{lemma}\label{lem:aux2}
For any word $w$ and graph $H$ with loops allowed, 
the class $\X=\mathcal{P}(w,H)$ has finite distinguishing number.
\end{lemma}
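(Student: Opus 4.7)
The plan is to produce explicit constants witnessing a finite distinguishing number; concretely, I claim that $k_{\X} \le 3|A|$ works with the corresponding $m = |A|$. Fix any $G = G_{w,H}(u_1,\dots,u_n) \in \X$ together with its letter partition $\pi_w(G) = \{V_a : a \in A\}$, and any $X \subseteq V(G)$, which we may assume is disjoint from the sets to be distinguished. For each letter $a \in A$, set
\[
D_a \;=\; \bigcup_{b \in A,\ ab \in E(H)} (X \cap V_b),
\]
and write $c(v) \subseteq V(G)$ for the (at most two) vertices consecutive to~$v$ in the index sequence $u_1 < \dots < u_n$. Unpacking the two-case edge rule defining~$G_{w,H}$ gives at once the key identity
\[
(N(v) \cap X) \,\triangle\, D_a \;=\; c(v) \cap X
\]
for every $v \in V_a$: a vertex $x \in X$ non-consecutive to~$v$ is adjacent to~$v$ iff $w_x a \in E(H)$ (matching $D_a$), whereas a consecutive $x \in X$ satisfies the opposite condition.

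Now suppose $X$ distinguishes disjoint sets $U_1, \dots, U_N$, each of size at least $3|A|$, and let $N_i \subseteq X$ denote the common neighbourhood of~$U_i$. By pigeonhole applied to the letter partition of~$U_i$, there is a letter $a_i$ and three distinct vertices $v_1, v_2, v_3 \in U_i \cap V_{a_i}$. The identity above gives $N_i = D_{a_i} \triangle (c(v_j) \cap X)$ for each $j \in \{1,2,3\}$, so the three sets $c(v_j) \cap X$ must coincide. If this common set contained some $x \in X$, then $x$ would be index-consecutive to three distinct vertices, contradicting the fact that every vertex has at most two index-consecutive neighbours. Hence $c(v_j) \cap X = \emptyset$ and $N_i = D_{a_i}$.

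Each $N_i$ therefore lies in the $|A|$-element set $\{D_a : a \in A\}$, so the number of distinguished sets of size at least $3|A|$ is bounded by~$|A|$, giving $k_\X \le 3|A|$. The only real content lies in the symmetric-difference identity above, which requires carefully unwinding the two-case edge rule of~$G_{w,H}$ and tracking how consecutive pairs in the index sequence perturb the ``default'' adjacency pattern predicted by~$D_a$; after that, both the pigeonhole on the finite alphabet~$A$ and the ``at most two consecutive neighbours'' observation are immediate.
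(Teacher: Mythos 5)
Your overall strategy is sound and close in spirit to the paper's own proof, which likewise pigeonholes a large distinguished set into a single class of the letter partition and exploits that each vertex has at most two ``exceptional'' adjacencies there; your version is in fact a little sharper, since it pins down the common neighbourhood of a large distinguished set exactly as $D_a$. The one genuine gap is the clause ``which we may assume is disjoint from the sets to be distinguished''. Under Definition~\ref{def:dist} (and the definition of distinguishing preceding it) the sets $U_i$ are not required to avoid $X$, and this is not a harmless normalisation: your key identity $(N(v)\cap X)\triangle D_a = c(v)\cap X$ fails for $v\in X\cap V_a$ when $H$ has a loop at $a$ (then $v$ itself lies in the symmetric difference), and your stated constants are actually false without the assumption. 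Counterexample: let $H$ be a single looped letter $a$, so the graphs in $\P(w,H)$ are complements of paths; in $G=\overline{P_n}$ (path order $1,\dotsc,n$, $n\ge 8$) take $X=\{t\}$ with $t$ in the middle. Then $X$ distinguishes the two disjoint sets $\{t-1,t,t+1\}$ (common neighbourhood $\emptyset$ in $X$) and $V(G)\setminus\{t-1,t,t+1\}$ (common neighbourhood $\{t\}$), i.e.\ two sets of size at least $3|A|$ although $|A|=1$. Note also that replacing $U_i$ by $U_i\setminus X$ may destroy the size hypothesis, so the disjointness really cannot be assumed without argument.

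Fortunately the repair is immediate and leaves your argument intact: for every $v\in V_a$, whether or not $v\in X$, one has $(N(v)\cap X)\triangle D_a \subseteq (c(v)\cap X)\cup\{v\} \subseteq \{v-1,v,v+1\}$, the extra element $v$ occurring only when $v\in X$ and $H$ has a loop at $a$. So if you pigeonhole four (rather than three) vertices $v_1<v_2<v_3<v_4$ of $U_i$ into one letter class, which needs $|U_i|\ge 3|A|+1$, then $N_i\triangle D_{a_i}$ is contained in both $\{v_1-1,v_1,v_1+1\}$ and $\{v_4-1,v_4,v_4+1\}$, which are disjoint, hence $N_i=D_{a_i}$; your counting then gives at most $|A|$ distinguished sets of size at least $3|A|+1$, so $k_\X\le 3|A|+1<\infty$, which proves the lemma. (For comparison, the paper derives $k_\X\le 2|A|+1$ by contradicting the $(\ell,2)$-partition property of the letter partition rather than computing the neighbourhoods exactly.)
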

\begin{proof}
Put $\ell=|H|$ and let $G$ be a graph in $\X$. Consider the letter partition $\pi=\pi_w(G)=\{V_a:a\in V(H)\}$ of~$G$,
which is an $(\ell,2)$-partition. 
Choose an arbitrary set of vertices $X \subseteq V(G)$ and let $\{U_1, U_2, \ldots, U_k\}$ be the sets distinguished by~$X$. 
If there are subsets $U_i$, $U_j$ and $V_a$ such that $|V_a \cap U_i| \geq 3$ and $|V_a \cap U_j| \geq 3$, 
then some vertex of~$X$ has at least three neighbours and at least three non-neighbours in~$V_a$, 
which contradicts the fact that $\pi$ is an $(\ell, 2)$-partition. Therefore, in the partition 
$\{V_a \cap U_i : a\in V(H),\ 1 \leq i \leq k\}$ we have at most $\ell$ sets of size at least~3. 
Note that every set $U_i$ of size at least $2 \ell + 1$ must contain at least one such set. 
Hence the family $\{U_1, U_2, \ldots, U_k\}$ contains at most $\ell$ sets of size at least $2 \ell +1$.
Since the set $X$ was chosen arbitrarily, we conclude that $k_{\X} \le 2\ell+1$, as required.   
\end{proof}

The graphs $G_{w,H}(u_1,u_2,\dotsc,u_n)$ defined on a sequence of
consecutive integers will play a special role in our considerations.

\begin{definition}
If $u_1, u_2, \ldots, u_m$ is a sequence of consecutive integers (i.e., $u_{k+1}=u_k+1$ for each $k$),
we call the graph $G_{w,H}(u_1,u_2, \ldots, u_m)$ an \emph{$|H|$-factor}.
Notice that each $|H|$-factor is an $(|H|,2)$-graph; if its letter partition is a strong $(|H|,2)$-partition,
we call it a \emph{strong $|H|$-factor}.
\end{definition}

Note that if $G=G_{w,H}(u_1,u_2, \ldots, u_m)$ is a strong $|H|$-factor,
then its sparsification $\phi(G)=\psi(G,\pi_{w}(G),H)$ is an induced path with $m$ vertices.

\begin{prop}
\label{prop:pwh}
If $w$ is an infinite word over a finite alphabet~$A$ and $H$~is a graph on~$A$, with loops allowed,
then the class $\P(w,H)$ is above the Bell number.
\end{prop}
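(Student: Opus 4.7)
The plan is to apply the ``if'' direction of Theorem~\ref{paths}. Put $\X=\P(w,H)$. By Lemma~\ref{lem:aux2}, $\X$~has finite distinguishing number, so Theorem~\ref{paths} is applicable. Moreover, since every $G\in\X$ is of the form $G_{w,H}(u_1,\dotsc,u_n)$ and the proof of Lemma~\ref{lem:aux2} shows that its letter partition is an $(|H|,2)$-partition, the constants supplied by Lemma~\ref{finitedistinguishingnumber} for $\X$ may be taken to be $\ell_\X=|H|$ and $d_\X=2$: discarding every letter-bag of size less than $5\times 2^{|H|}\cdot 2=10\cdot 2^{|H|}$ removes at most $10\cdot|H|\cdot 2^{|H|}$ vertices and leaves a strong $(|H|,2)$-graph. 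It therefore suffices to produce, for each $m$, a strong $(|H|,2)$-graph $G\in\X$ whose sparsification $\phi(G)$ has a connected component of order at least~$m$.

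For this I will use the strong $|H|$-factors, whose sparsifications are induced paths on all of their vertices, as observed immediately before the proposition. Let $A_\infty\subseteq A$ be the set of letters occurring infinitely often in~$w$; since $A$~is finite and $w$~is infinite, $A_\infty\ne\emptyset$. Fix $N_0$ so that $w_n\in A_\infty$ for every $n\ge N_0$, and then choose $L\ge m$ large enough that each letter of $A_\infty$ appears at least $10\cdot 2^{|H|}$ times in the segment $w_{N_0}w_{N_0+1}\cdots w_{N_0+L-1}$; such an~$L$ exists because each $a\in A_\infty$ occurs infinitely often in~$w$, so the position of its $(10\cdot 2^{|H|})$-th occurrence past $N_0$ is finite. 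Then $G:=G_{w,H}(N_0,N_0+1,\dotsc,N_0+L-1)$ is an $|H|$-factor lying in~$\X$; letters outside $A_\infty$ contribute no bags, each non-empty bag has $\ge 10\cdot 2^{|H|}$ vertices, so the letter partition is a strong $(|H|,2)$-partition, $G$~is a strong $|H|$-factor, and $\phi(G)$ is the induced path on $L\ge m$ vertices.

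The only subtle point I anticipate is the justification that the constants $\ell_\X,d_\X$ of Theorem~\ref{paths} may legitimately be chosen as $|H|$ and~$2$ for $\X=\P(w,H)$; once that is noted, the remainder is a pigeonhole argument about recurring letters in an infinite word. With these ingredients, Theorem~\ref{paths} yields $\X_n\ge n^{(1+o(1))n}$, which in particular places $\X$ above the Bell number.
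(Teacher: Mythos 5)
Your proof is correct and follows essentially the same route as the paper: finite distinguishing number via Lemma~\ref{lem:aux2}, arbitrarily long strong $|H|$-factors whose sparsifications are induced paths, and then the ``if'' direction of Theorem~\ref{paths}. The only differences are cosmetic: the paper reduces to the case where every letter of~$A$ recurs infinitely often by passing to a subclass $\P(w',H')$, whereas you select a window of~$w$ beyond the last occurrence of any non-recurrent letter, and you are somewhat more explicit than the paper about why the constants in Theorem~\ref{paths} may be taken as $\ell_\X=|H|$ and $d_\X=2$.
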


\begin{proof}
We may assume that every letter of~$A$ appears in~$w$ infinitely
many times: otherwise we can remove a sufficiently long starting
segment of~$w$ to obtain a word~$w'$ satisfying this condition,
replace~$H$ with its induced subgraph~$H'$ on the alphabet~$A'$
of~$w'$, and obtain a subclass $\P(w',H')$ of~$\P(w,H)$ with that property.
For sufficiently large $k$, the $|A|$-factor $G_k=G_{w,H}(1,\dotsc,k)$
is a strong $|A|$-factor; thus $\phi(G_k)$~is an induced path of length $k-1$.
Having a finite distinguishing number by Lemma~\ref{lem:aux2},
the class $\P(w,H)$ is above the Bell number by Theorem~\ref{paths}.
\end{proof}

\begin{definition}
An infinite word~$w$ is called \emph{almost periodic} if for any factor~$f$
of~$w$ there is a constant~$k_f$ such that any factor of~$w$
of length at least~$k_f$ contains~$f$ as a factor.
\end{definition}

The notion of an almost periodic word plays a crucial role in our 
characterisation of minimal classes above the Bell number. 
First, let us show that if $w$ is almost periodic, then $\mathcal{P}(w,H)$
is a minimal property above the Bell number. To prove this, we need an auxiliary lemma.

\begin{lemma}
\label{lem:conse}
Consider $G=G_{w,H}(u_1,\dotsc,u_n)$. If $G$ is a strong $(\ell,d)$-graph
and $\phi(G)$~contains a connected component~$C$ such that 
$|C| \ge \left(2d'^2 \ell^2 |H|^{2} + 1\right) (m-1) + 1$, where $d'=\max\{d,2\}$,
then $V(C)$~contains a sequence of $m$ consecutive integers.
\end{lemma}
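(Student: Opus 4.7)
The plan is to argue by contraposition: assume $V(C)$ contains no $m$ consecutive integers, and deduce $|V(C)| \le T(m-1)$ where $T = 2d'^2\ell^2|H|^2 + 1$. The first step is to decompose $V(C)$ into its maximal runs of consecutive integers $R_1, \ldots, R_p$. Each such run has length at most $m-1$ by assumption, so $|V(C)| \le p(m-1)$, and it suffices to prove $p \le T$.

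To bound $p$, the plan is to exploit the rigid structure of long-range edges in $\phi(G)$. Fix a strong $(\ell,d)$-partition $\pi=\{B_1,\dots,B_{\ell'}\}$ of $G$. For any two vertices $x, y \in V(G)$ with $|x-y|>1$, the definition of $G_{w,H}$ says $xy$ is an edge of $G$ if and only if $w_xw_y \in E(H)$, and the definition of $\phi(G) = \psi(G,\pi,H(G,\pi))$ then flips this edge if and only if the pair of $\pi$-bags containing $x, y$ is $d$-dense in $G$. Consequently, whether $xy$ is a long edge of $\phi(G)$ depends only on the \emph{type} $\tau(v):=(\pi(v),w_v)$ of each endpoint, where $\pi(v)$ denotes the $\pi$-bag of $v$; there are at most $\ell|H|$ distinct types.

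Associate with each run $R_i$ the ordered pair $(\tau(\text{first vertex of }R_i), \tau(\text{last vertex of }R_i))$, which lies in a set of size at most $\ell^2|H|^2$. If $p > T$, pigeonhole yields more than $2d'^2$ runs sharing a common ordered type-pair; say all with start vertex in $V_a \cap B_s$ and end vertex in $V_b \cap B_e$. The final task is to show this contradicts $\phi(G)[V(C)]$ being connected. The degree bound here is the main leverage: each vertex has at most $\ell d$ neighbors in $\phi(G)$, and moreover within each $\pi$-bag the sparsified graph is $d$-sparse. Combined with the at most two consecutive-integer neighbors of any vertex coming from the linear integer structure of $V(G)$, the runs sharing a common type-pair can admit at most $2d'^2$ interconnections within the component; the $d'^2 = \max(d,2)^2$ factor arises as the product of two independent degree-type bounds, one controlling incidences at the start endpoints in $V_a \cap B_s$ and one at the end endpoints in $V_b \cap B_e$.

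The main obstacle is this per-type bound: pinning down the constant $2d'^2$ demands careful bookkeeping of how the long edges of $\phi(G)[V(C)]$ linking same-type runs are simultaneously constrained by intra-bag sparsity, inter-bag sparsity after sparsification, and the at most two consecutive-integer perturbations at each vertex. Once this combinatorial bound is in hand, combining it with pigeonhole over the at most $\ell^2|H|^2$ type-pairs delivers $p \le T$ and completes the contrapositive.
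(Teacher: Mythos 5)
Your setup is fine --- contraposition, decomposing $V(C)$ into maximal runs of consecutive integers (each of length at most $m-1$, and any two vertices in distinct runs are non-consecutive), and the observation that for $|x-y|>1$ the adjacency of $x,y$ in $\phi(G)$ depends only on the pairs $(\pi\text{-bag},\text{letter})$ of the endpoints. But the core of the argument, the bound $p\le 2d'^2\ell^2|H|^2+1$ on the number of runs, is not established, and the route you propose does not lead there. Runs are joined to the rest of the component by long edges incident to \emph{arbitrary} vertices of the run, not to its first or last vertex, so classifying runs by the types of their two endpoints says nothing about how the component is held together. Worse, having more than $2d'^2$ runs share an endpoint type-pair yields no contradiction with connectivity: if a cell $U_i\cap V_a$ contains more than $2d'$ vertices, then (precisely because a conflicting pair of cells would be simultaneously $d'$-sparse and $d'$-dense, forcing both cells to have size at most $2d'$) its long-range relation to \emph{every} cell is non-adjacent in $\phi(G)$; so those endpoint vertices simply have no long edges at all, which is perfectly compatible with the runs being connected through long edges at interior vertices. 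The ``at most $2d'^2$ interconnections per type-pair'' claim you flag as the main obstacle is exactly the missing idea, and in the form you state it (a per-endpoint-type bound) it is not true in any useful sense.

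The paper's proof goes the other way around: it bounds the \emph{total} number of long edges of $\phi(G)$, with no reference to runs or their endpoints. Comparing the letter partition $\pi'=\{V_a\}$ (a $(|H|,2)$-, hence $(|H|,d')$-partition) with the strong $(\ell,d)$-partition $\pi=\{U_i\}$, a long edge of $\phi(G)$ can only join two cells $U_i\cap V_a$ and $U_j\cap V_b$ on which the two partitions disagree (one pair $d'$-sparse, the other $d'$-dense); such a pair of cells is then both $d'$-sparse and $d'$-dense, so each cell has at most $2d'$ vertices and carries at most $2d'^2$ long edges, giving at most $2d'^2(\ell|H|)^2$ long edges in the whole graph. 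Since every other edge of $\phi(G)$ joins consecutive integers, a connected component with no $m$ consecutive integers is a union of at most $|E|+1$ segments (it needs at least one long edge per additional segment, e.g.\ by a spanning-tree count), each of size at most $m-1$, which is the stated bound. If you want to salvage your write-up, replace the endpoint-type pigeonhole by this global count of long edges together with the explicit observation that $p$ maximal runs in a connected component force at least $p-1$ long edges.
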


\begin{proof}
Let $\pi = \{U_1, U_2, \ldots, U_{\ell'} \}$ be a strong $(\ell,d)$-partition of~$G$,
so that $\ell'\le\ell$ and $\phi(G)=\phi(G,\pi)$;
let $\pi'=\{V_a: a\in V(H)\}$ be the letter partition of~$G$, given by $V_a=\{u_j\in V(G):w_{u_j}=a\}$.
Put $k=|H|$.
Note that $\pi'$ is an $(k,2)$-partition, hence also an $(k, d')$-partition.

Let $E=E(\phi(G))\setminus E(\psi(G,\pi',H))$ be the set of all the
edges of $\phi(G)$ that are not edges of $\psi(G,\pi',H)$,
that is, that do not join two consecutive integers.
We will now upper-bound the number of such edges.
Observe that $E$~consists of
\begin{inparaenum}[(a)]
\item the edges between $U_i\cap V_{a}$ and $U_j\cap V_{b}$ 
where $(U_i,U_j)$ is $d'$-sparse and $(V_{a},V_{b})$ is $d'$-dense, and
\item the non-edges between $U_i\cap V_{a}$ and $U_j\cap V_{b}$ 
where $(U_i,U_j)$ is $d'$-dense and $(V_{a},V_{b})$ is $d'$-sparse.
\end{inparaenum}

Consider the partition $\rho=\{U_i \cap V_a\colon 1\le i\le\ell',\ a\in V(H)\}$ of~$G$,
which is an $(\ell'k,d')$-partition.
Let $(U_i\cap V_{a}, U_j\cap V_{b})$ be a pair of non-empty sets
such that $(U_i,U_j)$ is $d'$-sparse but $(V_{a},V_{b})$ is $d'$-dense.
Each such pair is both $d'$-sparse and $d'$-dense, and consequently we have
$|U_i\cap V_{a}|\le 2d'$ and $|U_j\cap V_{b}|\le 2d'$. Moreover, there
are at most $2d'^2$ edges between $U_i\cap V_{a}$ and $U_j\cap V_{b}$.
Similarly, for any pair $(U_i\cap V_{a}, U_j\cap V_{b})$ where
$(U_i,U_j)$ is $d'$-dense but $(V_{a},V_{b})$ is $d'$-sparse, we can show
that there are at most $2d'^2$ non-edges between $U_i\cap V_{a}$ and
$U_j\cap V_{b}$.
We conclude that $|E| \le 2d'^2(\ell'k)^2$.

Any edge of~$\phi(G)$ that is not in~$E$ joins two consecutive integers.
Hence any connected component~$C$ of~$\phi(G)$ consists of at most
$|E|+1$ segments of consecutive integers connected by edges from~$E$.
If $C$~does not contain a sequence of $m$ consecutive integers, it consists of
at most $|E|+1 \leq 2d'^2(\ell'k)^2+1$ segments of consecutive integers,
each of length at most~$m-1$; it can therefore contain at most
$\left(2d'^2(\ell'k)^2+1\right)(m-1) \le \left(2d'^2 \ell^2 |H|^2 + 1\right)(m-1)$ vertices.
\end{proof}

\begin{theorem}
\label{thm:ap-min}
If $w$ is an almost periodic infinite word and $H$~is a finite graph with loops allowed, 
then $\mathcal{P}(w,H)$ is a minimal hereditary property above the Bell number.
\end{theorem}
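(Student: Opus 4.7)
The plan is to argue by contradiction: suppose that some proper hereditary subclass $\X \subsetneq \P(w,H)$ is itself above the Bell number, and derive that some $G^* \in \P(w,H) \setminus \X$ must nevertheless lie in $\X$. Since $\X \subseteq \P(w,H)$, Lemma~\ref{lem:aux2} gives $k_\X<\infty$, so Theorem~\ref{paths} provides constants $\ell=\ell_\X$, $d=d_\X$ such that for every $m$ there is a strong $(\ell,d)$-graph $G_m \in \X$ whose sparsification $\phi(G_m)$ contains a connected component of order at least $m$.

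Now fix any $G^* = G_{w,H}(u_1^*,\ldots,u_k^*) \in \P(w,H) \setminus \X$, and let $f = w_1 w_2 \cdots w_N$ with $N := u_k^*$. By almost periodicity of $w$, there is a constant $k_f$ such that every factor of $w$ of length at least $k_f$ contains $f$ as a factor. Choosing $m$ sufficiently large in terms of $k_f$, $\ell$, $d$, and $|H|$ (specifically, as in the bound of Lemma~\ref{lem:conse} with the value $m$ there replaced by $k_f$), we ensure that the large connected component of $\phi(G_m)$ contains a block of $k_f$ consecutive integers $v, v+1, \ldots, v+k_f-1$ sitting inside $V(G_m)$.

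The factor $w_v w_{v+1} \cdots w_{v+k_f-1}$ must then contain $f$, so there is a shift $s \ge 0$ with $s+N \le k_f$ and $w_{v+s+j-1} = w_j$ for $1 \le j \le N$. The subset $\{v+s+u_i^*-1 : 1 \le i \le k\} \subseteq V(G_m)$ induces a copy of $G^*$, because the bijection $u_i^* \mapsto v+s+u_i^*-1$ preserves pairwise differences (hence the ``consecutive vs.\ non-consecutive'' case in the definition of $G_{w,H}$) and preserves the letters $w_{u_i^*}$ (hence adjacency in $H$); therefore the two-case $G_{w,H}$-adjacency rule is respected. Since $\X$ is hereditary and $G_m \in \X$, we conclude $G^* \in \X$, contradicting its choice.

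The main obstacle is the final identification step, namely verifying that an arbitrarily long consecutive run extracted by Lemma~\ref{lem:conse} can be turned into a genuine induced copy of the forbidden $G^*$; this is precisely where almost periodicity is used, and care is needed because $G_{w,H}$-adjacency depends both on whether indices are consecutive and on the letters they carry, so one must check that the letter-shift respects both ingredients. Everything else is bookkeeping: chaining finiteness of the distinguishing number (Lemma~\ref{lem:aux2}), the component characterisation of being above the Bell number in the finite-$k_\X$ regime (Theorem~\ref{paths}), and the quantitative link from large components of $\phi$ to long consecutive runs (Lemma~\ref{lem:conse}).
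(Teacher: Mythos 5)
Your argument is correct and is essentially the paper's own proof: the paper likewise fixes a forbidden graph $F=G_{w,H}(u_1,\dotsc,u_n)\in\P(w,H)\setminus\X$, applies almost periodicity to a finite factor of~$w$ covering the span of the $u_i$ (you use the initial segment $w_1\dotso w_{u_k^*}$, an immaterial difference), extracts a long run of consecutive integers inside a graph of~$\X$ via Lemma~\ref{lem:aux2}, Theorem~\ref{paths} and Lemma~\ref{lem:conse}, and re-embeds $F$ by a shift that preserves both letters and pairwise differences, contradicting heredity of~$\X$. The only point you leave implicit is the other half of the statement---that $\P(w,H)$ itself is above the Bell number---which the paper dispatches by citing Proposition~\ref{prop:pwh}; add that one citation and the proof is complete.
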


\begin{proof}
The class $\P=\P(w,H)$ is above the Bell number by Proposition~\ref{prop:pwh}.
Thus we only need to show that  any proper hereditary subclass~$\X$ of~$\P$ is below the
Bell number. Suppose $\X \subset \P$ and let $F\in\P\setminus\X$. By
definition of $\P(w,H)$, the graph~$F$ is of the form
$G_{w,H}(u_1,\dotsc,u_n)$ for some positive integers $u_1<\dotsb<u_n$. Let
$w'$ be the finite word $w'=w_{u_1}w_{u_1+1}w_{u_1+2}\dotso
w_{u_n-1}w_{u_n}$. As $w$ is almost periodic, there is an integer~$m$ such
that any factor of~$w$ of length~$m$ contains~$w'$ as factor. Assume, for the sake
of contradiction, that $\X$~is hereditary and above the Bell number. By Lemma~\ref{lem:aux2}, 
the distinguishing number of~$\P$, and hence of~$\X$, is finite, and therefore,
by Lemma~\ref{finitedistinguishingnumber} and Theorem~\ref{paths}, there exists
a strong $(\ell_\X,d_\X)$-graph $G=G_{w,H}(u_1',u_2',\dotsc,u'_{n'})\in\X$ 
such that $\phi(G)$~has a connected component~$C$ of order at least $\left(2d^2 \ell^2
|H|^{2} + 1\right) (m-1) + 1$, where $\ell=\ell_{\X}$ and $d=\max\{d_{\X},2\}$.
By Lemma~\ref{lem:conse}, the vertices of~$C$ contain a sequence of $m$ consecutive integers,
i.e., $V(G)\supseteq V(C)\supseteq \{u',u'+1,\dotsc,u'+m-1\}$.
However, the word $w_{u'}w_{u'+1}\dotso w_{u'+m-1}$ contains~$w'$;
therefore $G$~contains~$F$, a contradiction.
\end{proof}
 
The existence of minimal classes does not necessarily imply that every class above 
the Bell number contains a minimal one. However, in our case this turns out to be true, 
as we proceed to show next. Moreover, this will also imply that the minimal
classes described in Theorem~\ref{thm:ap-min} are the {\it only} minimal classes 
above the Bell number with $k_{\X}$ finite. 
To prove this, we first show in the next two lemmas that any
class~$\X$ above the Bell number with $k_{\X}$ finite contains
arbitrarily large strong $\ell_{\X}$-factors.

\begin{lemma} \label{zero}
Let $\X$ be a hereditary class with speed above the Bell number and
with finite distinguishing number~$k_{\X}$. Then for each~$m$, the
class~$\X$ contains an $\ell_{\X}$-factor of order~$m$.
\end{lemma}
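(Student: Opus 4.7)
The plan is to invoke Theorem~\ref{paths} to obtain a strong $(\ell_\X,d_\X)$-graph $G\in\X$ whose sparsification $\phi(G)$ contains an enormous connected component, extract a long induced path from that component, and then show that the vertices of this path induce the required $\ell_\X$-factor of~$G$.

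Since $k_\X<\infty$ and $\X$ lies above the Bell number, Lemma~\ref{finitedistinguishingnumber} combined with Theorem~\ref{paths} produces, for any parameter~$M$, a strong $(\ell_\X,d_\X)$-graph $G\in\X$ whose sparsification $\phi(G)$ has a connected component~$C$ with $|C|\ge M$. Because $\phi(G)$ is obtained by making every pair of bags $d_\X$-sparse, each vertex of $\phi(G)$ has at most $\Delta:=\ell_\X d_\X$ neighbours (at most $d_\X$ neighbours in each of the at most $\ell_\X$ bags). A BFS from an arbitrary vertex of~$C$ reaches at most $1+\Delta+\dotsb+\Delta^{r}\le(\Delta+1)^{r}$ vertices within radius~$r$, so choosing $M>(\Delta+1)^{m-1}$ forces $C$ to have diameter at least $m-1$; a shortest path between two vertices realizing this diameter yields an induced path on at least $m$ vertices, because any chord would shorten it.

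Let $v_1,v_2,\dotsc,v_m$ be the first $m$ vertices along this induced path, and let $\pi=\{V_1,\dotsc,V_{\ell'}\}$ (with $\ell'\le\ell_\X$) be a strong $(\ell_\X,d_\X)$-partition of~$G$ with density graph $H=H(G,\pi)$, padded with isolated vertices to ensure $|V(H)|=\ell_\X$. Define the finite word $w=w_1w_2\dotso w_m$ by setting $w_i$ to be the index of the bag containing~$v_i$. After relabelling $v_i\mapsto i$, the claim is that $G[\{v_1,\dotsc,v_m\}]=G_{w,H}(1,2,\dotsc,m)$, which is by definition an $\ell_\X$-factor of order~$m$; hereditariness of~$\X$ then places this factor inside~$\X$.

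Verification amounts to a case analysis on pairs $v_i,v_j$ with letters $a=w_i$, $b=w_j$. Since the partition is strong, each pair $(V_a,V_b)$ is either $d_\X$-sparse or $d_\X$-dense but not both, and by the definition of $\phi$ the map~$\phi$ flips exactly the dense pairs. For consecutive indices $i,i+1$ the edge $v_iv_{i+1}$ lies in $\phi(G)$, so $v_iv_{i+1}\in E(G)$ iff $(V_a,V_b)$ is sparse iff $ab\notin E(H)$; for non-consecutive indices $i,j$ the pair $v_iv_j$ is absent from $\phi(G)$, so $v_iv_j\in E(G)$ iff $(V_a,V_b)$ is dense iff $ab\in E(H)$. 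These are exactly the adjacency rules in the definition of $G_{w,H}$. The main obstacle I expect is precisely this alignment of the sparsification pattern with the two-case adjacency rule of $G_{w,H}$; in contrast, the long-induced-path extraction is a routine BFS argument and the application of Theorem~\ref{paths} is immediate.
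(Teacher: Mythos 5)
Your proposal is correct and follows essentially the same route as the paper's own proof: invoke Theorem~\ref{paths} to get a strong $(\ell_\X,d_\X)$-graph whose sparsification has a huge component, use the degree bound $\ell_\X d_\X$ on $\phi(G)$ to extract an induced (shortest) path on $m$ vertices, and read off the word~$w$ from the bags and $H$ from the density graph so that the path's vertices induce $G_{w,H}(1,\dotsc,m)$ in~$G$. Your explicit case analysis of the adjacency rule (and the padding of $H$ to exactly $\ell_\X$ vertices) only makes explicit what the paper leaves as an asserted isomorphism.
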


\begin{proof}
From Theorem~\ref{paths} it follows that for each~$m$ there is a
graph $G_m \in {\X}$ which admits a strong $(\ell_{\X}, d_{\X})$-partition
$\{V_1, V_2, \ldots, V_{\ell_m}\}$ with $\ell_m \leq \ell_{\X}$ such that
the sparsification~$\phi(G_m)$ has a connected component~$C_m$ of order at least~$(\ell_{\X}d_{\X})^m$.
Fix an arbitrary vertex~$v$ of~$C_m$.
As $C_m$ is an induced subgraph of~$\phi(G_m)$, the maximum degree in~$C_m$ is bounded by $d=\ell_{\X}d_{\X}$.
Hence for any $k>0$, in~$C_m$ there are at most $d(d-1)^{k-1}$ vertices at distance~$k$ from~$v$;
so there are at most $1+\sum_{k=1}^{m-2}d(d-1)^{k-1} < d^m$
vertices at distance at most $m-2$ from~$v$.
As $C_m$~has order at least~$d^m$, there exists a vertex~$v'$ of distance~$m-1$ from~$v$.
Therefore $C_m$ contains an induced path $v=v_1, v_2, \ldots, v_m=v'$ of length~$m-1$. 

Let $A=\{1,2, \ldots, \ell_m\}$ and let $H$ be the graph with vertex set~$A$
and edge between $i$ and $j$ if and only if $V_i$~is $d_{\X}$-dense with respect to $V_j$.
Let $w_i \in A$ be such that $v_i \in V_{w_i}$ and define the word $w=w_1w_2 \ldots w_m$.
The induced subgraph $G_m[v_1, v_2, \ldots, v_m] \cong G_{w,H}(1, 2, \ldots, m)$
is an $\ell_{\X}$-factor of order~$m$ contained in~$\X$.
\end{proof}

\begin{lemma}\label{first}
Let $\ell$ and $B$ be positive integers such that $B \geq 5 \times 2^{\ell+1}$.
Then any $\ell$-factor $G_{w,H}(1, 2, \ldots, |w|)$ of order at least~$B^{\ell}$ contains
a strong $\ell$-factor $G_{w',H}(1,2, \ldots, |w'|)$
of order at least~$B$ such that $w'$~is a factor of~$w$.
\end{lemma}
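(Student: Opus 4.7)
The plan is to prove the lemma by induction on the size of the alphabet of~$w$, with the threshold $M := 5 \times 2^{\ell+1}$ and the target length~$B$ kept fixed throughout. Recall that $G_{w',H}(1,2,\dotsc,|w'|)$ is a \emph{strong} $\ell$-factor precisely when every letter of $V(H)$ appearing in~$w'$ does so at least $5\times 2^{\ell}\cdot 2 = M$ times, since the vertices of the letter bag~$V_a$ are exactly the positions in~$w'$ occupied by~$a$. Thus the problem reduces to finding a factor~$w'$ of~$w$ of length at least~$B$ in which every appearing letter occurs at least $M$ times.

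I would prove the following slightly more general statement by induction on $\ell' \in \{1,2,\dotsc,\ell\}$: every word~$v$ over an alphabet of size at most~$\ell'$ with $|v| \geq B^{\ell'}$ contains a factor of length at least~$B$ in which every appearing letter occurs at least $M$ times. The base case $\ell' = 1$ is immediate: the unique letter of~$v$ appears $|v|\ge B\ge M$ times, so we take~$v' = v$.

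For the inductive step $\ell' \geq 2$, if every letter of~$v$ already appears at least $M$ times, then $v' = v$ works. Otherwise, choose a letter~$a$ appearing $k \leq M-1$ times in~$v$; its occurrences split~$v$ into at most $k+1 \leq M$ maximal factors avoiding~$a$, whose combined length is $|v| - k \geq B^{\ell'} - M + 1$. By pigeonhole, one of these factors, call it~$v''$, has length at least
\[
\left\lceil \frac{B^{\ell'} - M + 1}{M} \right\rceil .
\]
A direct calculation using $B \geq M$ shows that this ceiling is at least $B^{\ell'-1}$. Since $v''$~is a word over an alphabet of size at most $\ell' - 1$ (it misses the letter~$a$), the inductive hypothesis applied to~$v''$ yields the required factor, which is also a factor of~$v$.

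The main point to watch is the tight boundary case $B = M$ of the pigeonhole bound. Here the quantity above is $(M^{\ell'}-M+1)/M = M^{\ell'-1}-1+1/M$, whose ceiling is exactly $M^{\ell'-1}=B^{\ell'-1}$, which is why I keep the ceiling explicit in the bookkeeping; the case $B > M$ is strictly easier. Apart from this boundary check, the argument is routine, and applying the strengthened statement with $\ell' = \ell$ and $v = w$ yields the lemma.
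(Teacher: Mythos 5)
Your proof is correct and follows essentially the same route as the paper's: induction on the number of distinct letters, removing a letter with few occurrences, and a pigeonhole argument on the maximal factors avoiding it. The only cosmetic differences are that you use the exact strongness threshold $M=5\times 2^{\ell+1}$ where the paper uses the (larger, equally valid) threshold $B$, and you make explicit the integrality/ceiling step at the boundary case, which the paper's pigeonhole claim also implicitly relies on.
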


\begin{proof}
We will prove by induction on $r \in \{1,2,\dotsc,\ell\}$ that any
$\ell$-factor $G_{w, H}(1,2, \ldots, B^r)$ on $B^r$ vertices with at most $r$ bags in the letter partition contains
a strong $\ell$-factor on at least $B$ vertices. 
For $r=1$ the statement holds because any $\ell$-factor with one bag in the letter partition of order $B \geq 5 \times 2^{\ell+1}$
is a strong $\ell$-factor.
Suppose $1< r \leq \ell$. Then either each letter of~$w=w_1w_2 \ldots w_{B^r}$ appears at
least $B$ times, in which case we are done, or there is a letter
$a=w_i$ which appears less than $B$ times in~$w$.
Consider the maximal factors of~$w$ that do not contain the letter~$a$.
Because the number of occurrences of the letter~$a$ in~$w$ is less than~$B$,
there are at most $B$ such factors of~$w$ and the sum of their
orders is at least $B^r-B+1$.
By the pigeonhole principle, one of these factors has order at least $B^{r-1}$;
call this factor~$w''$.
Now $w''$ contains at most $r-1$ different letters;
thus $G''=G_{w'',H}(1,2, \ldots, |w''|)$ is an $\ell$-factor of order at least $B^{r-1}$
for which the letter partition has at most $(r-1)$ bags.
By induction, $G''$~contains a strong $\ell$-factor $G_{w', H}(1,2, \ldots, |w'|)$ of order at least~$B$
such that $w'$~is a factor of~$w''$ which is a factor of~$w$.
Hence $w'$~is a factor of~$w$ and we are done.
\end{proof}

\begin{theorem}\label{factors}
Suppose $\X$ is a hereditary class above the Bell number with $k_{\X}$ finite.
Then $\X\supseteq\P(w, H)$ for an infinite almost periodic word~$w$ and a graph~$H$ of order at most~$\ell_{\X}$ with loops allowed.
\end{theorem}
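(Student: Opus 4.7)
The plan is to construct $w$ in three stages: fix a density graph $H$ by pigeonholing over finitely many possibilities, produce an infinite word whose factors all lie in a suitable factorial language via K\"onig's lemma, and extract an almost periodic subword via a minimality argument from symbolic dynamics.

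First, apply Lemma~\ref{zero} to obtain, for each positive integer $n$, an $\ell_\X$-factor $G_{v^{(n)}, H_n}(1,2,\dotsc,n)\in\X$, where $H_n$ is a graph with loops allowed on a vertex set $V(H_n)\subseteq\{1,\dotsc,\ell_\X\}$ and $v^{(n)}$ is a word over $V(H_n)$. Since there are only finitely many such graphs, the pigeonhole principle yields a single graph $H$ (on some alphabet $A\subseteq\{1,\dotsc,\ell_\X\}$) together with an infinite sequence of words $w^{(k)}$ over $A$, of unbounded length, satisfying $G_{w^{(k)},H}(1,2,\dotsc,|w^{(k)}|)\in\X$. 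Let $F$ be the set of finite words over $A$ that occur as a factor of some $w^{(k)}$. Then $F$ is closed under taking factors and contains words of every positive length; moreover, heredity of~$\X$ ensures $G_{u,H}(1,\dotsc,|u|)\in\X$ for every $u\in F$, because if $u=w^{(k)}_{s+1}\dotso w^{(k)}_{s+|u|}$ then $G_{u,H}(1,\dotsc,|u|)$ is isomorphic to the subgraph of $G_{w^{(k)},H}(1,\dotsc,|w^{(k)}|)$ induced on the consecutive vertices $\{s+1,\dotsc,s+|u|\}$.

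Next, the prefix tree of $F$ is finitely branching and infinite, so K\"onig's lemma gives an infinite word $w'\in A^\mathbb{N}$ each of whose prefixes, and hence each of whose factors, lies in $F$. Let $X$ be the orbit closure of $w'$ in $A^\mathbb{N}$ (with product topology) under the shift map $\sigma$; this is compact, non-empty, and $\sigma$-invariant. A standard Zorn's lemma argument produces a non-empty minimal closed $\sigma$-invariant subset $Y\subseteq X$; by a classical theorem (Birkhoff, see also Furstenberg), every point of a minimal shift space is uniformly recurrent, which is precisely the notion of almost periodicity used here. Pick any $w\in Y$. Since $w$ is a limit of shifts of $w'$ in the product topology, every finite factor of $w$ is also a factor of $w'$, and therefore lies in $F$.

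To verify $\P(w,H)\subseteq\X$, take an arbitrary $G = G_{w,H}(u_1,\dotsc,u_m)\in\P(w,H)$. Then $G$ is an induced subgraph of $G_{w,H}(1,2,\dotsc,u_m)$, a graph determined by the initial segment $w_1\dotso w_{u_m}$ of $w$ together with $H$. Since this segment lies in $F$, the previous paragraph gives $G_{w,H}(1,\dotsc,u_m)\in\X$, whence $G\in\X$ by heredity of~$\X$.

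The main obstacle is the last step of producing an almost periodic $w$: the Birkhoff minimality argument is concise but imports external material from topological dynamics. A self-contained combinatorial substitute would build $w$ by an iterated nesting construction, at each stage selecting a sufficiently long factor of some $w^{(k)}$ in which all previously chosen target factors recur with bounded gaps, using repeated applications of the pigeonhole principle to the family $\{w^{(k)}\}$. Such a construction works but is notationally heavier than the one-line appeal to minimality.
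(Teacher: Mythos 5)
Your proposal is correct, and its overall skeleton matches the paper's: obtain arbitrarily long $\ell_\X$-factors from Lemma~\ref{zero}, pigeonhole over the finitely many density graphs to fix one~$H$, pass to the factorial language of words whose associated factor-graphs lie in~$\X$ (your verification that a consecutive block of $G_{w^{(k)},H}(1,\dotsc,|w^{(k)}|)$ induces exactly $G_{u,H}(1,\dotsc,|u|)$ is the same heredity argument the paper uses implicitly), and finally conclude $\P(w,H)\subseteq\X$ from the fact that every $G_{w,H}(u_1,\dotsc,u_m)$ is an induced subgraph of $G_{w,H}(1,\dotsc,u_m)$. Where you genuinely diverge is the extraction of the almost periodic word. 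The paper stays combinatorial: it takes an inclusion-minimal infinite factorial sublanguage $L'$, argues $L'$ is well-quasi-ordered by the factor relation, glues an infinite chain of words into an infinite word (with a small wrinkle about possibly taking reversals), and derives almost periodicity from minimality of~$L'$. You instead build the infinite word $w'$ by K\"onig's lemma on the prefix tree (which neatly sidesteps the reversal issue) and then pass to a minimal subshift in the orbit closure, invoking Zorn and the Birkhoff/Gottschalk fact that points of minimal systems are uniformly recurrent, which indeed coincides with the paper's definition of almost periodicity. These two routes are essentially the same idea in different language -- minimal infinite factorial languages are exactly the languages of minimal subshifts -- but yours imports standard topological dynamics where the paper gives a self-contained WQO argument; your closing remark about a nested pigeonhole substitute shows you are aware of this trade-off. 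One further small difference: you use only Lemma~\ref{zero} and skip the passage to \emph{strong} factors via Lemma~\ref{first}; since the language argument only needs the factor-graphs to lie in~$\X$, strongness is not actually required here, so this omission is a harmless (slight) simplification rather than a gap.
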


\begin{proof}
From Lemmas~\ref{zero} and \ref{first} it follows that each class~$\X$
with speed above the Bell number with finite distinguishing
number~$k_{\X}$ contains an infinite set~$\S$ of strong $\ell_{\X}$-factors of increasing order.
For each~$H$ on $\{1,2,\dotsc,\ell\}$ with $1\le\ell\le\ell_\X$,
let $\S_H=\{G_{w,H}(1,\dotsc,m)\in\S\}$ be the set of all $\ell_\X$-factors in~$\S$
whose adjacencies are defined using the density graph~$H$.
Then for some (at least one) fixed graph~$H_0$ the set~$\S_{H_0}$ is infinite.
Hence also $L=\{w:G_{w,H_0}(1,\dotsc,m)\in\X\}$ is an infinite language.
As $\X$~is a hereditary class, the language~$L$ is closed under taking word factors (it is a \emph{factorial language}).

It is not hard to see that any infinite factorial language contains an inclusion-minimal infinite factorial language.
So let $L'\subseteq L$ be a minimal infinite factorial language. 
It follows from the minimality that $L'$~is well quasi-ordered by the factor relation,
because otherwise removing one word from any infinite antichain and taking all factors of the remaining words
would generate an infinite factorial language strictly contained in~$L'$.
Thus there exists an infinite chain $w^{(1)},w^{(2)},\dotsc$ of words in~$L'$
such that for any $i<j$, the word $w^{(i)}$~is a factor of~$w^{(j)}$.
More precisely, for each~$i$ there is a non-negative integer~$s_i$ such that $w^{(i)}_k = w^{(i+1)}_{k+s_i}$.
Let $g(i,k)=k+\sum_{j=1}^{i-1}s_j$.
Now we can define an infinite word~$w$ by putting $w_k=w^{(i)}_{g(i,k)}$
for the least value of~$i$ for which the right-hand side is defined.
(Without loss of generality we get that $w$ is indeed an infinite
word; otherwise we would need to take the reversals of all the
words~$w^{(i)}$.)

Observe that any factor of~$w$ is a factor of some~$w^{(i)}$ and hence in the language~$L'$.
If $w$~is not almost periodic, then there exists a factor~$f$ of~$w$
such that there are arbitrarily long factors~$f'$ of~$w$ not containing~$f$.
These factors~$f'$ generate an infinite factorial language~$L''\subset L'$
which does not contain $f\in L'$. This contradicts the minimality of~$L'$ and proves that $w$
is almost periodic.

Because any factor of~$w$ is in~$L$, any $G_{w,H_0}(u_1,\dotsc,u_m)$ is an induced subgraph of some $\ell_\X$-factor in~$\X$.
Therefore $\P(w,H_0)\subseteq\X$.
\end{proof}

Combining Theorems~\ref{thm:ap-min} and~\ref{factors} we derive the main result of this section. 

\begin{cor}
\label{cor:last}
Let $\X$ be a class of graphs with $k_\X<\infty$.
Then $\X$~is a minimal hereditary class above the Bell number if and only if
there exists a finite graph~$H$ with loops allowed and an infinite almost periodic word~$w$
over~$V(H)$ such that $\X=\P(w,H)$.
\qed
\end{cor}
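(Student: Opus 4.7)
The plan is to derive the corollary as a one-step combination of Theorems~\ref{thm:ap-min} and~\ref{factors}, which between them already capture the two implications of the biconditional. The only real work is to make sure that the hypotheses needed to invoke minimality line up correctly; in particular, we must keep track of the fact that the candidate classes $\P(w,H)$ themselves lie in the family of hereditary properties above the Bell number with finite distinguishing number.

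For the sufficiency direction (``if''), suppose that $H$ is a finite graph with loops allowed and $w$ is an infinite almost periodic word over $V(H)$. Then Theorem~\ref{thm:ap-min} states verbatim that $\P(w,H)$ is a minimal hereditary property above the Bell number, which is exactly the conclusion we want. Lemma~\ref{lem:aux2} moreover guarantees that $k_{\P(w,H)}<\infty$, so this class really does belong to the family under consideration in the corollary.

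For the necessity direction (``only if''), suppose that $\X$ is a minimal hereditary class above the Bell number with $k_\X<\infty$. Apply Theorem~\ref{factors} to obtain a finite graph $H$ (with loops allowed, of order at most~$\ell_\X$) and an infinite almost periodic word $w$ over $V(H)$ such that $\P(w,H)\subseteq\X$. By Proposition~\ref{prop:pwh}, the class $\P(w,H)$ is itself above the Bell number, and it is hereditary by construction. Since $\X$ is a \emph{minimal} hereditary class above the Bell number, the containment $\P(w,H)\subseteq\X$ together with $\P(w,H)$ being above the Bell number forces $\X=\P(w,H)$.

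The argument really is this short, so there is no serious obstacle to surmount; the only point one has to be careful about is to notice that in applying minimality of $\X$ one must verify that $\P(w,H)$ is a bona fide competitor, i.e., hereditary and above the Bell number, which is exactly what Proposition~\ref{prop:pwh} provides. Everything substantive has already been packed into Theorems~\ref{thm:ap-min} and~\ref{factors}, so the corollary is essentially a bookkeeping step.
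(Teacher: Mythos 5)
Your proposal is correct and follows essentially the same route as the paper, which derives the corollary by combining Theorem~\ref{thm:ap-min} (sufficiency) with Theorem~\ref{factors} plus minimality (necessity); your added remarks that Proposition~\ref{prop:pwh} certifies $\P(w,H)$ as a hereditary class above the Bell number and that Lemma~\ref{lem:aux2} gives $k_{\P(w,H)}<\infty$ are exactly the right bookkeeping to make the minimality argument airtight.
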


Lastly, note that~-- similarly to the case of infinite distinguishing number~--
each of the minimal classes $\P(w,H)$ has an infinite universal graph:
$G_{w,H}(1,2,3,\dotsc)$.


\section{Decidability of the Bell number}
\label{sec:decid}
As we mentioned in the introduction, every class below the Bell number
can be characterised by a finite set of forbidden induced subgraphs. 
Therefore all classes for which the set of minimal forbidden induced subgraphs is infinite 
have speed above the Bell number. For classes defined by finitely many forbidden 
induced subgraphs, the problem of deciding whether their speed is above 
the Bell number is more complicated and decidability of this problem has been an open question. 
In this section, we employ our characterisation of minimal classes above the Bell number 
to answer this question positively.  

Our main goal is to provide an algorithm that decides for an input
consisting of a finite number of graphs $F_1,\dotsc,F_n$
whether the speed of~$\X=\Free(F_1,\dotsc,F_n)$ is above
the Bell number. That is, we are interested in the following problem.

\begin{problem}
\label{pr:bell}
\probl{A finite set of graphs $\F=\{F_1,F_2,\dotsc,F_n\}$}
{Yes, if the speed of $\X=\Free(\F)$ is above the Bell number; no otherwise.}
\end{problem}

Our algorithm, following the characterisation of
minimal classes above the Bell number, distinguishes two cases
depending on whether the distinguishing number~$k_\X$ is finite or
infinite. First we show how to discriminate between these two cases.

\begin{problem}
\label{pr:dist}
\probl{A finite set of graphs $\F=\{F_1,F_2,\dotsc,F_n\}$}
{Yes, if $k_\X=\infty$ for $\X=\Free(\F)$; no otherwise.}
\end{problem}

\begin{theorem}
\label{thm:dist-poly}
There is a polynomial-time algorithm that solves Problem~\ref{pr:dist}.
\end{theorem}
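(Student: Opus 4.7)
The plan is to reduce Problem~\ref{pr:dist} to a finite number of induced-subgraph tests. By Theorem~\ref{thm:infinite}, if $k_\X = \infty$ then $\X = \Free(\F)$ contains at least one of the thirteen minimal classes $\K_1, \dotsc, \K_7, \overline{\K_1}, \dotsc, \overline{\K_6}$. The converse also holds: each of these classes has itself infinite distinguishing number (this is precisely why they appear as obstructions in Theorem~\ref{thm:infinite}), and the distinguishing number is clearly monotone under inclusion of classes, since any graph witnessing infinite distinguishing number in a subclass also witnesses it in the ambient class. Hence $k_\X = \infty$ if and only if $\X$ contains one of the thirteen fixed classes.

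By Theorem~\ref{thm:finforb}, each of these minimal classes is of the form $\Free(\F_i)$ for an explicitly known finite set $\F_i$ of graphs of bounded size. I claim that $\Free(\F_i) \subseteq \Free(\F)$ if and only if every $F \in \F$ contains some graph from $\F_i$ as an induced subgraph. Indeed, $\Free(\F_i) \not\subseteq \Free(\F)$ holds precisely when some $F \in \F$ is an induced subgraph of some $G \in \Free(\F_i)$; since $\Free(\F_i)$ is hereditary, this is equivalent to $F \in \Free(\F_i)$, i.e., to $F$ being $\F_i$-free. Negating gives the claim.

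The algorithm is therefore the exhaustive check: for each of the thirteen minimal classes, test whether every $F \in \F$ contains some $F' \in \F_i$ as an induced subgraph; output ``yes'' as soon as some class passes this test, and ``no'' otherwise. Each individual induced-subgraph test runs in time $O(|V(F)|^{|V(F')|})$, and since both the number of candidate classes and the sizes of the sets $\F_i$ (as well as the orders of the graphs in them) are bounded by absolute constants determined by Theorem~\ref{thm:finforb}, the overall running time is polynomial in the input size. There is essentially no technical obstacle: all the substantive work is carried out by Theorems~\ref{thm:infinite} and~\ref{thm:finforb}, and this decidability result is a direct algorithmic corollary.
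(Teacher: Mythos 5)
Your proof is correct and follows essentially the same route as the paper: reduce to checking, for each of the thirteen minimal classes $\K_i=\Free(\F_i)$ of Theorems~\ref{thm:infinite} and~\ref{thm:finforb}, whether every $F\in\F$ contains a member of~$\F_i$ as an induced subgraph, which amounts to constantly many polynomial-time induced-subgraph tests. Your explicit justification of the converse direction (that containing one of the thirteen classes forces $k_\X=\infty$, via monotonicity of the distinguishing number) is a welcome extra detail that the paper's proof leaves implicit in its ``if and only if''.
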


\begin{proof}
By Theorem~\ref{thm:infinite}, $k_\X=\infty$ if and only if $\X$~contains one of the thirteen minimal classes listed there.
By Theorem~\ref{thm:finforb}, each of the minimal classes is defined by finitely many forbidden induced subgraphs;
thus membership can be tested in polynomial time.
Then the answer to Problem~\ref{pr:dist} is no if and only if each of the minimal classes given by Theorem~\ref{thm:infinite}
contains at least one of the graphs in~$\F$, which can also be tested in polynomial time.
\end{proof}

By Corollary~\ref{cor:last}, the minimal hereditary classes with
finite distinguishing number with speed above the Bell number can
be described as $\P(w,H)$ with an almost periodic infinite word $w$.
That characterisation applies both to classes defined by finitely
many forbidden subgraphs and to classes defined by infinitely many
forbidden subgraphs.
In the case of finitely many forbidden subgraphs, however,
a stronger characterisation is possible, as we show next.

\begin{definition}
\label{def:cyclic}
Let $w=w_1w_2\ldots$ be an infinite word over a finite alphabet~$A$.
If there exists some $p$ such that $w_i=w_{i+p}$ for all $i\in \mathbb{N}$, 
we call the word~$w$ \emph{periodic} and the number~$p$ its \emph{period}.
If, moreover, for some period~$p$ the letters $w_1,w_2,\dotsc,w_p$
are all distinct, we call the word~$w$ \emph{cyclic}.

If $w$ is a finite word, then $w^\infty$ is the periodic word obtained by
concatenating infinitely many copies of the word~$w$;
thus $(w^\infty)_i=w_k$ for $k=i \bmod |w|$.

A class~$\X$ of graphs is called a \emph{periodic class} (\emph{cyclic
class}, respectively) if there exists a graph~$H$ with loops allowed
and a periodic (cyclic, respectively) word~$w$ such that $\X=\P(w,H)$.
\end{definition}

\begin{definition}
\label{def:strip}

Let $A=\{1,2,\dotsc,\ell\}$ be a finite alphabet, $H$~a graph on~$A$ with loops allowed,
and $m$ a positive integer.
Define a graph $S_{H,m}$ with vertex set $V(S_{H,m})=A\times\{1,2,\dotsc,m\}$
and an edge between $(a,j)$ and $(b,k)$ if and only if
$(a,j) \neq (b,k)$ and one of the following holds:
\begin{compactitem}
\item $ab \in E(H)$ and either $|a-b| \neq 1$ or $j \neq k$ (or both);
\item $ab \notin E(H)$ and $|a-b|=1$ and $j=k$.  
\end{compactitem}
The graph $S_{H, m}$ is called an \emph{$(\ell,m)$-strip}.
\end{definition}

Notice that a strip can be viewed as the graph obtained from the union of $m$ disjoint paths
$(1,j){-}(2,j){-}\dotsb{-}(\ell,j)$ for $j \in \{1,2,\dotsc,m\}$ by swapping edges with non-edges
between vertices $(a,j)$ and $(b,k)$ if $ab \in E(H)$. 

\begin{theorem}
\label{thm:charfin}
Let $\X=\Free(F_1,F_2,\dotsc,F_n)$ with $k_\X$ finite.
Then the following conditions are equivalent:
\begin{compactenum}[(a)]
\item The speed of~$\X$ is above the Bell number.
\item $\X$ contains a periodic class.
\item For every $p \in \mathbb{N}$, $\X$ contains a cyclic class with period at least $p$.
\item There exists a cyclic word~$w$ and a graph~$H$ on the alphabet of~$w$
such that $\X$~contains the $\ell$-factor $G_{w,H}(1,2,\dotsc,2\ell m)$
with $\ell=|V(H)|$ and $m=\max\bigl\{|F_i|:i\in\{1,2,\dotsc,n\}\bigr\}$.
\item For any positive integers $\ell$, $m$, the class~$\X$
contains an $(\ell,m)$-strip.
\end{compactenum}
\end{theorem}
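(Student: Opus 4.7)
The plan is to establish the cycle (a) $\Rightarrow$ (b) $\Rightarrow$ (c) $\Rightarrow$ (d) $\Rightarrow$ (a), giving (a) $\Leftrightarrow$ (b) $\Leftrightarrow$ (c) $\Leftrightarrow$ (d), and then to close with (a) $\Leftrightarrow$ (e). Throughout I set $m=\max_i|F_i|$. The recurring observation is that, since $\F$ is finite with every $F_i$ of order at most $m$, a graph lies in $\X$ iff each of its induced subgraphs of order at most $m$ does; equivalently, a hereditary class $\mathcal{Q}$ is contained in $\X$ iff every graph of order at most $m$ in $\mathcal{Q}$ lies in $\X$.

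For (a) $\Rightarrow$ (b), Theorem~\ref{factors} supplies an almost periodic word $w$ and graph $H$ with $\P(w,H)\subseteq\X$. To extract periodicity I form the finite directed graph $D$ whose vertices are the length-$m$ factors of $w$ and whose arcs encode one-letter shifts $af\to fb$. Since $w$ traces an infinite walk in $D$, some vertex is revisited, producing a directed cycle $C$ and hence a periodic infinite word $w'$ all of whose length-$\le m$ factors are factors of $w$. A run-decomposition argument (cut a length-$\le m$ position sequence of $w'$ into maximal consecutive runs, then realize each such run independently in $w$, spaced far apart) then shows every graph of order at most $m$ in $\P(w',H)$ is isomorphic to one in $\P(w,H)\subseteq\X$, so $\P(w',H)\subseteq\X$ by the opening observation. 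For (b) $\Rightarrow$ (c), given periodic $\P(w,H)\subseteq\X$ of period $p$ and any target $p'$, I choose $k$ with $kp\ge p'$ and unroll: use the fresh alphabet $\{1,\dotsc,kp\}$, the cyclic word $w''=1,2,\dotsc,kp,1,2,\dotsc$, and set $ij\in E(H'')$ iff $w_iw_j\in E(H)$; a direct check gives $\P(w'',H'')=\P(w,H)\subseteq\X$. The step (c) $\Rightarrow$ (d) is immediate: any cyclic $\P(w,H)\subseteq\X$ from (c) contains the factor $G_{w,H}(1,\dotsc,2\ell m)$ with $\ell=|V(H)|$.

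For (d) $\Rightarrow$ (a) I upgrade (d) to $\P(w,H)\subseteq\X$. Any graph $G_{w,H}(u_1,\dotsc,u_r)\in\P(w,H)$ with $r\le m$ is realised as an induced subgraph of $G_{w,H}(1,\dotsc,2\ell m)$ by choosing witness positions $v_1<\dotsb<v_r$ that mimic both the sequence of letters $w_{u_i}$ and the pattern of ``gap $=1$'' versus ``gap $>1$'' transitions: at each non-consecutive step the next occurrence of the required letter in the cyclic word $w$ is within $\ell$ positions, so inductively $v_r\le\ell+(m-1)(\ell+1)\le 2\ell m$. The opening observation then yields $\P(w,H)\subseteq\X$, and Proposition~\ref{prop:pwh} puts $\X$ above the Bell number. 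For (a) $\Rightarrow$ (e), I use the chain (a) $\Rightarrow$ (c): for given $\ell',m'$, the unrolled cyclic class $\P(w'',H'')\subseteq\X$ has period $k\ell\ge\ell'$, and inside the factor $G_{w'',H''}(1,\dotsc,2k\ell m')\in\X$ I locate a $(k\ell,m')$-strip by taking, for each row $j\in\{1,\dotsc,m'\}$, the block of consecutive positions $(2j-2)k\ell+1,\dotsc,(2j-1)k\ell$: within a block the consecutive letters $1,2,\dotsc,k\ell$ produce one row of the strip, while the separating gap of size $k\ell+1>1$ forces all cross-row edges to be governed by $H''$-adjacency only, exactly matching the strip. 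Heredity (removing the last $k\ell-\ell'$ columns from the alphabet end) yields the desired $(\ell',m')$-strip. For (e) $\Rightarrow$ (a), for any target $M$ I pick an $(\ell,m)$-strip $S_{H,m}\in\X$ with $\ell\ge M$ and $m$ large. A direct computation of the sparsification with respect to the column partition shows $\phi(S_{H,m})$ is $m$ disjoint copies of $P_\ell$, independently of $H$. Since $k_\X<\infty$, Lemma~\ref{finitedistinguishingnumber} extracts a strong $(\ell_\X,d_\X)$-induced subgraph $G'\subseteq S_{H,m}$ missing at most $c_\X$ vertices; Proposition~\ref{prop:phi} and the rigid path-forest structure of $\phi(S_{H,m})$ together guarantee that $\phi(G')$ still contains a connected component of order at least $\ell-O(c_\X)$, which exceeds $M$ once $\ell$ is large. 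Theorem~\ref{paths} then gives $\X_n\ge n^{(1+o(1))n}$, so $\X$ is above the Bell number.

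The hardest step is (a) $\Rightarrow$ (b): the de~Bruijn cycle is the natural route to periodicity, but certifying $\P(w',H)\subseteq\X$ requires the careful run-decomposition transfer and leans crucially on the finiteness and uniform size bound of $\F$. Step (e) $\Rightarrow$ (a) is also subtle because the transparent column-partition sparsification of the strip lives in the $(\ell,1)$-regime while Theorem~\ref{paths} demands the $(\ell_\X,d_\X)$-regime; reconciling the two sparsifications while preserving a long connected component in $\phi(G')$ is the main technical point there.
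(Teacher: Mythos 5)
Your cycle (a)$\Rightarrow$(b)$\Rightarrow$(c)$\Rightarrow$(d)$\Rightarrow$(a) and your derivation of (e) from the cyclic classes essentially track the paper's proof: the paper extracts periodicity from the almost periodic word via an explicit repeated factor ($w$ starting with $aba$) rather than your window-pigeonhole/de~Bruijn cycle, but the transfer back into $\P(w,H)\subseteq\X$ is the same run-by-run embedding, and (b)$\Rightarrow$(c), (c)$\Rightarrow$(d), (d)$\Rightarrow$(a) and the strip construction coincide with the paper's up to presentation. One small caution in (a)$\Rightarrow$(b): realizing the maximal runs ``spaced far apart'' in $w$ silently requires every run word to occur in $w$ arbitrarily late; for a general infinite word this can fail, so you should state explicitly that the almost periodicity of the word supplied by Theorem~\ref{factors} (every factor recurs) is what licenses this placement.

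The genuine gap is in (e)$\Rightarrow$(a). You compute the sparsification of the strip with respect to its column (letter) partition and then assert that ``Proposition~\ref{prop:phi} and the rigid path-forest structure'' guarantee that $\phi(G')$ retains a long component. Proposition~\ref{prop:phi} does not apply: it compares two strong partitions \emph{with the same parameters} $(\ell,d)$ of the \emph{same} graph, whereas your column partition is an $(\ell,2)$-partition of $S_{H,m}$ with $\ell\to\infty$ bags, while Theorem~\ref{paths} concerns the sparsification of the subgraph $G'$ taken with respect to a strong $(\ell_\X,d_\X)$-partition with $\ell_\X$ a fixed constant; neither partition is a strong partition for the other's parameters (the bags of the $(\ell_\X,d_\X)$-partition need not have size $5\times 2^{\ell}\cdot 2$), and the two partitions do not even live on the same graph. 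This reconciliation is precisely where the paper works hardest: with $m=2d_\X+c_\X+2$, it passes to the prime partition $p(\pi)$ of the strong $(\ell_\X,d_\X)$-partition of $S'$, uses Lemma~\ref{symdif2} (symmetric difference at least $5\times 2^{\ell_\X}d_\X-2d_\X\ge 8$ across prime bags) against the bound $|N(a,j)\ominus N(a,j')|\le 6$ for two vertices in the same column to conclude that each column lies inside one prime bag, and then uses $|V_a|\ge m-c_\X>2d_\X+1$ to force the $d_\X$-density relations between columns to agree with those between the corresponding prime bags; only then does it follow that $\phi(S')$ is the union of the column-paths minus at most $c_\X$ vertices, and $m>c_\X$ leaves one path of order $\ell$ untouched. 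You correctly flag this as ``the main technical point,'' but flagging it is not proving it: as written, the appeal to Proposition~\ref{prop:phi} does not justify the step, and an argument of the above kind must be supplied.
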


\begin{proof}
(a)${}\Rightarrow{}$(b):
From Theorem~\ref{factors} we know that $\X$ contains a class $\P(w,H)$
with some almost periodic word $w=w_1w_2 \ldots$ and a finite graph~$H$ with loops allowed.
Let $m=\max\{|F_1|, |F_2|, \ldots, |F_n|\}$ and let $a=w_1w_2 \ldots w_m$
be the word consisting of the first $m$ letters of the infinite word~$w$.
Since $w$~is almost periodic, the factor~$a$ appears in~$w$ infinitely often.
In particular, there is $m'>m$ such that $w_{m'+1}w_{m'+2} \ldots w_{m'+m}=a$.
Define $b$ to be the word between the two $a$'s in~$w$, i.e., let $b=w_{m+1}w_{m+2}\ldots w_{m'}$.
In this way, $w$~starts with the initial segment~$aba$.

We claim that $\X$ contains the periodic class $\P (w', H)$ with $w'=(ab)^{\infty}$.
For the sake of contradiction, suppose that $\X$ does not contain $\P(w', H)$.
Then for some $i \in \{1,2,\dotsc,n\}$, we have $F_i \in \P(w', H)$.
So $F_i \cong G_{w',H}(u_1, u_2, \ldots, u_k)$ for some $u_1<u_2< \dotsb < u_k$. 
Let $U=\{u_1,u_2,\dotsc,u_k\}$.
We are now looking for a monotonically increasing function $f:U\to\mathbb{N}$ with these two properties:
firstly, $w_{f(u)}=w'_{u}$ for any $u\in U$; secondly, $f(u)-f(u')=1$ if and only if $u-u'=1$.
If we can establish the existence of such a function, we will then have
$F_i \cong G_{w',H}(u_1,u_2,\dotsc,u_k) \cong G_{w,H}(f(u_1),f(u_2),\dotsc,f(u_k)) \in
\P(w,H) \subseteq \X$, a contradiction.

To construct such a function~$f$, consider a maximal block $\{u_j,u_{j+1},\dotsc,u_{j+p}\}$ of consecutive integers in~$U$
(that is, $u_{j-1}<u_j-1$; $u_j=u_{j+1}-1$;~\dots; $u_{j+p-1}=u_{j+p}-1$; $u_{j+p}<u_{j+p+1}-1$).
Furthermore, consider the word $w'_{u_j} w'_{u_{j+1}} \ldots w'_{u_{j+p}}$ of length at most~$m$,
which is a factor of~$w'=(ab)^\infty$ and thus also a factor of~$aba$ because $|aba|>2m$.
The word $aba$, being a factor of~$w$, appears infinitely often in~$w$ because $w$~is almost periodic.
Hence not only we can define $f: U\to\mathbb{N}$ in such a way that $w_{f(u)}=w'_{u}$ for any $u\in U$
and that blocks of consecutive integers in~$U$ are mapped to blocks of consecutive integers,
but we can also do it monotonically and so that $f(u)>f(u')+1$ whenever $u>u'+1$.
This finishes the proof of the first implication.

\smallskip
(b)${}\Rightarrow{}$(c):
Let $\X$ contain a class $\P(w,H)$, where $w$~is a periodic word and $H$~is a graph with loops allowed on the alphabet of~$w$. 
If $w$ is not cyclic (i.e., if some letters appear more than once within the period), 
the class $\P(w,H)$ can be transformed into a cyclic one by extending the alphabet, renaming multiple appearances
of the same letter within the period to different letters of the extended alphabet and modifying the graph $H$ accordingly. 
More formally, let $p$ be the period of $w$. Define a new word $w'=(123\dots p)^\infty$ and a graph~$H'$
with vertex set $\{1,2,\dotsc,p\}$ and an edge $ij\in E(H')$ if and only if $w_iw_j\in E(H)$.
Then any graph $G_{w,H}(u_1,u_2,\dotsc,u_m)\in\P(w,H)$ is isomorphic to the graph
$G_{w',H'}(u_1,u_2,\dotsc,u_m)\in\P(w',H')$. Hence $\X$~also
contains~$\P(w',H')$, where the word~$w'$ is cyclic. 

Since any periodic word of period $p$ is also a periodic word of period $kp$ for any $k \in \mathbb{N}$, 
by the same transformation a periodic class of period $p$ can be transformed into a cyclic class of period $kp$.

\smallskip
(c)${}\Rightarrow{}$(d): 
Follows directly from the definition of a cyclic class.

\smallskip
(d)${}\Rightarrow{}$(a): 
Let $\X$~contain the $\ell$-factor $G_{w,H}(1,2,\dotsc,2\ell m)$.
We prove that $\X$ then contains the whole class~$\P(w,H)$; its
speed will then be above the Bell number by Proposition~\ref{prop:pwh}.
For the sake of contradiction suppose that for some~$i$, the graph~$F_i$ belongs to~$\P(w,H)$.
Then $F_i \cong G_{w,H}(u_1, u_2, \ldots, u_k)$ for some $k \leq m$ and $u_1<u_2< \dotsb < u_k$.
Note that the period of~$w$ is~$\ell$ (by the definition of a cyclic class). Put $u'_1=u_1\bmod\ell$,
or $u'_1=\ell$ if $u_1\equiv 0 \pmod\ell$.
Furthermore, for each $i\ge2$ let $u'_i=(u_i\bmod\ell)+c_i\ell$, where each $c_i$~is chosen in such a way that
$0<u'_i-u'_{i-1}\le\ell+1$ for all~$i$ and $u'_i-u'_{i-1}=1$ if and only if $u_i-u_{i-1}=1$.
By construction, $F_i\cong G_{w,H}(u_1,u_2,\dotsc,u_k)\cong G_{w,H}(u'_1,u'_2,\dotsc,u'_k)$ and
$u'_k<k(\ell+1)\le m(\ell+1)\le 2\ell m$.
Hence $F_i$~is isomorphic to an induced subgraph of $G_{w,H}(1,2,\dotsc,2\ell m)\in\X=\Free(F_1,F_2,\dotsc,F_n)$,
a contradiction.

\smallskip
(c)${}\Rightarrow{}$(e): 
Let $\X$ contain the cyclic class $\P(w,H)$, where $w$~is a cyclic word with period~$p>\ell$.
Since $p>\ell$, the subgraph of $G_{w,H}(1,2,\dotsc,pm)$ induced by the bags
corresponding to the first $\ell$ letters of~$w$ is an $(\ell,m)$-strip.

\smallskip
(e)${}\Rightarrow{}$(a): 
Let $\ell_\X$, $d_\X$ and $c_\X$ be the constants given by Lemma~\ref{finitedistinguishingnumber}
and put $m=2d_\X+c_\X+2$.
We show that for any fixed positive integer~$\ell$, the class~$\X$ contains
a strong $(\ell_\X,d_\X)$-graph~$G$ such that its sparsification~$\phi(G)$ has a connected component of order at least~$\ell$.
Then we can apply Theorem~\ref{paths}.

So let $\ell$ be a positive integer. By assumption, $\X$~contains an $(\ell,m)$-strip~$S_{H,m}$.
By Lemma~\ref{finitedistinguishingnumber}, after removing no more than $c_\X$~vertices
we are left with a strong $(\ell_{\X},d_{\X})$-graph~$S'$ with a strong $(\ell_{\X}, d_{\X})$-partition~$\pi$.
Let $V_a=\{(a,j)\in V(S'): 1\le j\le m\}$ be the letter bags of~$S'$, $1\le a\le\ell$,
and consider the prime partition $p(\pi)=\{W_1, W_2, \dotsc, W_{\ell'}\}$.
If two vertices $x, y$ belong to different bags of~$p(\pi)$, then according to Lemma~\ref{symdif2}
we have $|N(x) \ominus N(y)| \geq 5 \times 2^{\ell_\X}d_\X-2d_\X \geq 8$.
However, if we have two vertices $(a,j)$, $(a,j')$ of~$S'$ in the same letter bag~$V_a$,
then  $N(a,j)\ominus N(a,j') \subseteq \{({a-1},j), ({a-1},j'),\allowbreak
(a,j), (a,j'),\allowbreak ({a+1},j), ({a+1},j')\}$, so its size is at most~$6$.
Hence, we deduce that each $V_a \subseteq W_{f(a)}$ for some function~$f$.

Now notice that $(V_a, V_b)$ is $d_\X$-dense, that is, $ab$~is and edge of~$H$,
if and only if $(W_{f(a)}, W_{f(b)})$ is $d_{\X}$-dense.
Indeed, if one of them was $d_{\X}$-dense and the other $d_{\X}$-sparse,
then $(V_a, V_b)$ would be both $d_{\X}$-dense and $d_{\X}$-sparse, in which case $|V_a| \leq 2d_{\X}+1$.
But this is not true, as $V_a$~is obtained from a set of size $m= 2d_{\X} + 2 + c_{\X}$
by removing at most $c_{\X}$~vertices.

It follows that $\phi(S')$ is constructed by swapping edges with non-edges between $V_a$ and~$V_b$ such that $ab\in E(H)$.
Hence $\phi(S')$ is a linear forest obtained from the paths $(1,j){-}(2,j){-}\dotsb{-}(\ell,j)$ for $j \in \{1,2,\dotsc,m\}$
by removing at most $c_\X$ vertices.
As $m>c_{\X}$, at least one of the paths is left untouched. Therefore, $\phi(S')$~contains a connected component of size at least~$\ell$.
\end{proof}

Finally, we are ready to tackle the decidability of Problem~\ref{pr:bell}.

\begin{algo}
\label{algo:bell}
\probl{A finite set of graphs $\F=\{F_1,F_2,\dotsc,F_n\}$}
{Yes, if the speed of $\X=\Free(\F)$ is above the Bell number; no otherwise.}
\begin{enumerate}[(1)]
\item Using Theorem~\ref{thm:dist-poly}, decide whether $k_\X=\infty$. If it is, output \emph{yes} and stop.

\item Set $m:=\max\{|F_1|,|F_2|,\dotsc,|F_n|\}$ and $\ell:=1$.

\item Loop:
\begin{enumerate}[(3a)]
\item For each graph (with loops allowed) $H$ on $\{1,2,\dotsc,\ell\}$ construct the $(\ell,\ell)$-strip $S_{H,\ell}$.
	Check if some $F_i$ is an induced subgraph of~$S_{H,\ell}$. If for each~$H$ the strip~$S_{H,\ell}$ contains
	some~$F_i$, output \emph{no} and stop.
\item For each graph (with loops allowed) $H$ on $\{1,2,\dotsc,\ell\}$ and for each word~$w$ consisting of $\ell$
	distinct letters from $\{1,2,\dotsc,\ell\}$ check if the $\ell$-factor $G_{w^\infty,H}(1,2,\dotsc,2\ell m)$
	contains some~$F_i$ as an induced subgraph. If one of these $\ell$-factors contains no~$F_i$, output \emph{yes} and stop.
\item Set $\ell := \ell+1$ and repeat.
\end{enumerate}
\end{enumerate}
\end{algo}

It remains to prove the correctness of this algorithm.

\begin{theorem}
Algorithm~\ref{algo:bell} correctly solves Problem~\ref{pr:bell}.
\end{theorem}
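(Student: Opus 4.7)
The plan is to establish (i) that every output produced by the algorithm is correct, and (ii) that for every input the algorithm terminates. The two key ingredients are Theorem~\ref{thm:dist-poly} (for step~(1)) and the equivalences (a)${}\Leftrightarrow{}$(d)${}\Leftrightarrow{}$(e) of Theorem~\ref{thm:charfin} (for the main loop).

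For partial correctness, step~(1) outputs \emph{yes} exactly when $k_\X=\infty$, in which case $\X$ is above the Bell number by the Balogh--Bollob\'{a}s--Weinreich result quoted after Definition~\ref{def:dist}. Next, if step~(3b) outputs \emph{yes} at iteration~$\ell$, then it has found a word $w=w_1w_2\dotso w_\ell$ of $\ell$ distinct letters and a graph~$H$ on $\{1,\dotsc,\ell\}$ such that the $\ell$-factor $G_{w^\infty,H}(1,2,\dotsc,2\ell m)$ contains no~$F_i$, hence lies in~$\X$. By Definition~\ref{def:cyclic}, $w^\infty$~is cyclic, so condition~(d) of Theorem~\ref{thm:charfin} is satisfied, and (d)${}\Rightarrow{}$(a) yields that $\X$~is above the Bell number. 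Conversely, if step~(3a) outputs \emph{no} at iteration~$\ell$, then for every graph~$H$ on $\{1,\dotsc,\ell\}$ the strip $S_{H,\ell}$ contains some~$F_i$, so $\X$~contains no $(\ell,\ell)$-strip; the contrapositive of (a)${}\Rightarrow{}$(e) then implies that $\X$~is not above the Bell number.

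For termination, we consider two cases (assuming step~(1) did not already output \emph{yes}, so $k_\X<\infty$). If the speed of~$\X$ is above the Bell number, then (a)${}\Rightarrow{}$(d) of Theorem~\ref{thm:charfin} provides a cyclic word~$w$ of some period~$\ell_0$ and a graph~$H$ on $\ell_0$~vertices with $G_{w,H}(1,2,\dotsc,2\ell_0 m)\in\X$. The first $\ell_0$ letters of~$w$ are distinct, so at iteration~$\ell_0$ step~(3b) tests precisely this pair and outputs \emph{yes}; no earlier iteration can trigger step~(3a), since by the partial-correctness argument that would contradict the speed being above the Bell number. If the speed of~$\X$ is not above the Bell number, then by the contrapositive of (a)${}\Rightarrow{}$(e) there exist $\ell_0, m_0$ such that $\X$~contains no $(\ell_0,m_0)$-strip. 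Set $L=\max\{\ell_0,m_0\}$. For any graph~$H$ on $\{1,\dotsc,L\}$, the subgraph of $S_{H,L}$ induced by the vertices $(a,j)$ with $a\le\ell_0$ and $j\le m_0$ is precisely the $(\ell_0,m_0)$-strip $S_{H',m_0}$ where $H'=H[\{1,\dotsc,\ell_0\}]$. Since $\X$~is hereditary and contains no such strip, it also contains no $(L,L)$-strip, so at iteration~$\ell=L$ step~(3a) fires; again step~(3b) cannot have triggered earlier, by partial correctness.

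The one step that requires genuine care is the termination argument in the negative case, because the failure of~(e) only supplies one pair $(\ell_0,m_0)$ while step~(3a) probes the diagonal case $(\ell,\ell)$. The bridging observation is that an $(\ell,m)$-strip~$S_{H,m}$ contains, as an induced subgraph formed by restricting the letter coordinate and the index coordinate, every $(\ell',m')$-strip $S_{H[\{1,\dotsc,\ell'\}],m'}$ with $\ell'\le\ell$ and $m'\le m$. Thus non-containment of \emph{some} $(\ell_0,m_0)$-strip propagates to non-containment of \emph{every} $(L,L)$-strip for $L\ge\max\{\ell_0,m_0\}$, which is exactly what step~(3a) detects.
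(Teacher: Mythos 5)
Your proof is correct and follows essentially the same route as the paper: step~(1) is handled by the infinite-distinguishing-number result, the correctness of the \emph{no}/\emph{yes} outputs by Theorem~\ref{thm:charfin}(e) and~(d) respectively, and termination by (d) in the positive case and by embedding an $(\ell_0,m_0)$-strip into every $(L,L)$-strip (plus hereditariness) in the negative case, exactly as in the paper, with your version only spelling out the strip embedding and the impossibility of a premature wrong stop a bit more explicitly. One cosmetic slip: in the negative termination case you invoke ``the contrapositive of (a)${}\Rightarrow{}$(e)'' where you need the contrapositive of (e)${}\Rightarrow{}$(a), but since Theorem~\ref{thm:charfin} asserts the full equivalence this is immaterial.
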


\begin{proof}
We show that if the algorithm stops, it gives the correct answer,
and furthermore that it will stop on any input without entering an infinite loop.
First, if it stops in step (1), the answer is correct by~\cite{jump}, since any class with
infinite distinguishing number has speed above the Bell number.

Assume that the algorithm stops in step (3a) and outputs \emph{no}.
This is because every $(\ell,\ell)$-strip contains some forbidden subgraph~$F_i$,
hence no $(\ell,\ell)$-strip belongs to~$\X$.
By Theorem~\ref{thm:charfin}(e), the speed of~$\X$ is below the Bell number.

Next suppose that the algorithm stops in step (3b) and answers \emph{yes}.
Then $\X$~contains the $\ell$-factor $G_{w^\infty,H}(1,2,\dotsc,2\ell m)$, where 
$w^\infty$~is a cyclic word. Hence by Theorem~\ref{thm:charfin}(d)
the speed of~$\X$ is above the Bell number.

Finally, if $k_\X=\infty$ the algorithm stops in step~(1).
If $k_\X<\infty$ and the speed of~$\X$ is above the Bell number,
then by Theorem~\ref{thm:charfin}(d) the algorithm will stop in step (3b).
If, on the other hand, the speed of~$\X$ is below the Bell number,
then by Theorem~\ref{thm:charfin}(e) there exist positive integers
$\ell$, $M$ such that $\X$~contains no $(\ell,M)$-strip.
Let $N=\max\{\ell,M\}$. Obviously, $\X$~contains no $(N,N)$-strip,
because any $(N,N)$-strip contains some (many) $(\ell,M)$-strips as induced subgraphs
and $\X$~is hereditary.
Therefore the algorithm will stop in step~(3a) after finitely many steps.
\end{proof}


\section{Concluding remarks}
\label{sec:con}

In this paper, we have characterised all minimal hereditary classes
of graphs whose speed is at least the Bell number~$B_n$.
This characterisation allowed us to show that the problem of
determining if the speed of a hereditary class~$\X$ defined by
finitely many forbidden induced subgraphs is above or below the Bell
number is decidable, i.e., there is an algorithm that gives a
solution to this problem in a finite number of steps.
However, the complexity of this algorithm, in terms of the input
forbidden graphs, remains an open question.
In particular, it would be interesting to determine if there is a
polynomial bound on the minimum~$\ell$ such that the input class~$\X$
contains an $\ell$-factor as in Theorem~\ref{thm:charfin}(d) if it
is above the Bell number, and it fails to contain any $(\ell,\ell)$-strip
as in Theorem~\ref{thm:charfin}(e) if it is below.


\begin{thebibliography}{10}

\bibitem{Ale92}
V.~E. Alekseev.
\newblock Range of values of entropy of hereditary classes of graphs.
\newblock {\em Diskret. Mat.}, 4(2):148--157, 1992.
\newblock In Russian; translation in {\it Discrete Math. Appl.} 3 (1993), no.
  2, 191--199.

\bibitem{Ale97}
V.~E. Alekseev.
\newblock On lower layers of a lattice of hereditary classes of graphs.
\newblock {\em Diskretn. Anal. Issled. Oper. Ser. 1}, 4(1):3--12, 1997.
\newblock In Russian.

\bibitem{ALR09}
P.~Allen, V.~Lozin, and M.~Rao.
\newblock Clique-width and the speed of hereditary properties.
\newblock {\em Electron. J. Combin.}, 16(1):Research Paper 35, 11 pp, 2009.

\bibitem{AloBalBol:The-structure-of-almost}
N.~Alon, J.~Balogh, B.~Bollob{\'a}s, and R.~Morris.
\newblock The structure of almost all graphs in a hereditary property.
\newblock {\em J. Combin. Theory Ser. B}, 101(2):85--110, 2011.

\bibitem{speed}
J.~Balogh, B.~Bollob{\'a}s, and D.~Weinreich.
\newblock The speed of hereditary properties of graphs.
\newblock {\em J. Combin. Theory Ser. B}, 79(2):131--156, 2000.

\bibitem{penultimate}
J.~Balogh, B.~Bollob{\'a}s, and D.~Weinreich.
\newblock The penultimate rate of growth for graph properties.
\newblock {\em European J. Combin.}, 22(3):277--289, 2001.

\bibitem{jump}
J.~Balogh, B.~Bollob{\'a}s, and D.~Weinreich.
\newblock A jump to the {B}ell number for hereditary graph properties.
\newblock {\em J. Combin. Theory Ser. B}, 95(1):29--48, 2005.

\bibitem{threshold-graphs}
V.~Chv{\'a}tal and P.~L. Hammer.
\newblock Aggregation of inequalities in integer programming.
\newblock In {\em Studies in integer programming ({P}roc. {W}orkshop, {B}onn,
  1975)}, pages 145--162. Ann. of Discrete Math., Vol. 1. North-Holland,
  Amsterdam, 1977.

\bibitem{EFR86}
P.~Erd{\H o}s, P.~Frankl, and V.~R{\"o}dl.
\newblock The asymptotic number of graphs not containing a fixed subgraph and a
  problem for hypergraphs having no exponent.
\newblock {\em Graphs Combin.}, 2(2):113--121, 1986.

\bibitem{EKR76}
P.~Erd{\H o}s, D.~J. Kleitman, and B.~L. Rothschild.
\newblock Asymptotic enumeration of {$K_n$}-free graphs.
\newblock In {\em Colloquio Internazionale sulle Teorie Combinatorie (Rome,
  1973), Tomo II}, number~17 in Atti dei Convegni Lincei, pages 19--27. Accad.
  Naz. Lincei, Rome, 1976.

\bibitem{FolHam:Split-graphs}
S.~F{\"o}ldes and P.~L. Hammer.
\newblock Split graphs.
\newblock In {\em Proceedings of the Eighth Southeastern Conference on
  Combinatorics, Graph Theory and Computing (Louisiana State Univ., Baton
  Rouge, La., 1977)}, number XIX in Congressus Numerantium, pages 311--315.
  Utilitas Math., Winnipeg, Man., 1977.

\bibitem{KPR87}
{\relax Ph}.~G. Kolaitis, H.~J. Pr{\"o}mel, and B.~L. Rothschild.
\newblock {$K_{l+1}$}-free graphs: asymptotic structure and a 0--1 law.
\newblock {\em Trans. Amer. Math. Soc.}, 303(2):637--671, 1987.

\bibitem{KL11}
N.~Korpelainen and V.~Lozin.
\newblock Two forbidden induced subgraphs and well-quasi-ordering.
\newblock {\em Discrete Math.}, 311(16):1813--1822, 2011.

\bibitem{PS1}
H.~J. Pr{\"o}mel and A.~Steger.
\newblock Excluding induced subgraphs: quadrilaterals.
\newblock {\em Random Structures Algorithms}, 2(1):55--71, 1991.

\bibitem{PS3}
H.~J. Pr{\"o}mel and A.~Steger.
\newblock Excluding induced subgraphs. {III}. {A} general asymptotic.
\newblock {\em Random Structures Algorithms}, 3(1):19--31, 1992.

\bibitem{PS2}
H.~J. Pr{\"o}mel and A.~Steger.
\newblock Excluding induced subgraphs. {II}. {E}xtremal graphs.
\newblock {\em Discrete Appl. Math.}, 44(1--3):283--294, 1993.

\bibitem{SZ94}
E.~R. Scheinerman and J.~S. Zito.
\newblock On the size of hereditary classes of graphs.
\newblock {\em J. Combin. Theory Ser. B}, 61(1):16--39, 1994.

\bibitem{Yan:The-complexity-of-the-partial}
M.~Yannakakis.
\newblock The complexity of the partial order dimension problem.
\newblock {\em SIAM J. Algebraic Discrete Methods}, 3(3):351--358, 1982.

\end{thebibliography}
\end{document}